\definecolor{dark-gray}{gray}{0.3}
\definecolor{dkgray}{rgb}{.4,.4,.4}
\definecolor{dkblue}{rgb}{0,0,.5}
\definecolor{medblue}{rgb}{0,0,.75}
\definecolor{rust}{rgb}{0.5,0.1,0.1}
\newtheorem{theorem}{Theorem}
\newtheorem{lemma}[theorem]{Lemma}
\newtheorem{corollary}[theorem]{Corollary}
\theoremstyle{definition}
\numberwithin{equation}{section}
\numberwithin{theorem}{section}
\renewcommand{\phi}{\varphi}
\newcommand{\dist}{\operatorname{dist}}
\newcommand{\abs}[1]{\vert #1 \vert}
\newcommand{\norm}[1]{\Vert #1 \Vert}
\DeclareFontFamily{OT1}{pzc}{}
\DeclareFontShape{OT1}{pzc}{m}{it}{<-> s * [1.200] pzcmi7t}{}
\DeclareMathAlphabet{\mathpzc}{OT1}{pzc}{m}{it}
\DeclareMathOperator{\supp}{supp}
\newcommand{\dom}[1]{\mathrm{dom}\left(#1\right)}
\newcommand{\State}{\STATE}
\newcommand{\For}{\FOR}
\newcommand{\EndFor}{\ENDFOR}
\newtheorem*{rep@theorem}{\rep@title}
\newcommand{\newreptheorem}[2]{%
	\newenvironment{rep#1}[1]{%
		\def\rep@title{#2 \ref{##1}}%
		\begin{rep@theorem}}%
		{\end{rep@theorem}}}
\renewcommand{\cite}[1]{\citep{#1}}
\newcommand\acknowledge[1]{
  \begingroup
  \renewcommand\thefootnote{}\footnote{#1}
  \addtocounter{footnote}{-1}
  \endgroup
}
\crefname{lemma}{lemma}{lemmas}
\title{Almost surely constrained convex optimization}
\date{2017\\ December}
\author{Olivier Fercoq$^\dagger$
$\quad$ Ahmet Alacaoglu$^\ast$
$\quad$ Ion Necoara$^{\ddagger}$
$\quad$ Volkan Cevher$^\ast$ 
\acknowledge{This work was supported by the Swiss National Science Foundation (SNSF) under grant number $200021\_178865 / 1$. 
This project has received funding from the European Research Council (ERC) under the European Union's Horizon $2020$ research and innovation programme (grant agreement no $725594$ - time-data).~~~
~~~$^\ddagger$The work of I. Necoara has received support from the Executive Agency for Higher Education, Research and Innovation Funding (UEFISCDI), Romania: PNIII-P4-PCE-2016-0731, project ScaleFreeNet, no. 39/2017.}
\\ \\
$^\dagger$LTCI, T\'el\'ecom ParisTech, Universit\'e Paris-Saclay, France\\ 
$^\ddagger$Automatic Control and Systems Engineering Department, \\University Politehnica of Bucharest, Romania\\
$^\ast$LIONS, Ecole Polytechnique F\'ed\'erale de Lausanne, Switzerland \\
}
\date{}
\begin{document}

\maketitle

\begin{abstract} 
We propose a stochastic gradient framework for solving stochastic composite convex optimization problems with (possibly) infinite number of linear inclusion constraints that need to be satisfied almost surely. We use smoothing and homotopy techniques to handle constraints without the need for matrix-valued projections. We show for our stochastic gradient algorithm $\mathcal{O}(\log(k)/\sqrt{k})$ convergence rate for general convex objectives and $\mathcal{O}(\log(k)/k)$ convergence rate for restricted strongly convex objectives. 
These rates are known to be optimal up to logarithmic factors, even without constraints. 
We demonstrate the performance of our algorithm with numerical experiments on basis pursuit, a hard margin support vector machines and a portfolio optimization and show that our algorithm achieves state-of-the-art practical performance.
\end{abstract}

\section{Introduction}
In many machine learning applications, optimization problems involve stochasticity in objective functions or constraint sets.
Even though the problems with stochastic objective functions are well-studied in the literature, investigation of stochastic constraints seem to be rather scarce. 
In particular, we focus on the following stochastic convex optimization template:
\begin{align}\label{eq: prob1}
\min _{x\in\mathbb{R}^d} & \{ P(x) := F(x) + h(x)\} \\
& A(\xi) x \in b(\xi)  ~~ \xi \text{-almost surely} \nonumber
\end{align}
where $F(x) = \mathbb{E}_\xi \left[ f(x, \xi) \right]$ with a convex and smooth $f(\cdot, \xi)$ such that $\mathbb{E}\left[ \nabla f(x, \xi) \right] = \nabla F(x)$; and  $h: \mathbb{R}^d \to \mathbb{R}\cup$ $\{+\infty\}$ is a nonsmooth, proximable convex function. 

We seek to satisfy the stochastic linear inclusion constraints in \eqref{eq: prob1} \textit{almost surely}. We argue that this change is what sets \eqref{eq: prob1} apart from the standard stochastic setting in the literature. Indeed, we assume that $A(\xi)$ is a $m \times d$ matrix-valued random variable and $b(\xi) \subseteq \mathbb R^m$ is a random convex set. 
For the special case when $A(\xi)$ is an identity matrix, \eqref{eq: prob1} recovers optimization problems where the constraint set is the intersection of a possibly infinite number of sets.

Applications of almost surely constrained problems arise in many fields, such as machine learning, operations research, and mathematical finance.
Interesting cases include semi-infinite linear programming, sparse regression, portfolio optimization, classification, distributed optimization and streaming settings, and consensus optimization problems in standard constrained optimization where the access to full data is not possible~\cite{sonnenburg2006large,abdelaziz2007multi,nedic2018network,towfic2015stability}.

Particular instances of~\eqref{eq: prob1} involve primal support vector machines (SVM) classification and sparse regression which are central in machine learning~\cite{shalev2011pegasos,garrigues2009homotopy}
Due to the huge volume of data that is used for these applications, storing or processing this data at once is in general not possible.
Therefore, using these data points one by one or in mini batches in learning algorithms is becoming more important.
One direction that the literature focused so far is solving unconstrained formulations of these problems, successes of which are amenable to regularization parameters that needs to be tuned.
By presenting a method capable of solving~\eqref{eq: prob1} directly, we present a parameter-free approach for solving these problems.

The most popular method of choice for solving constrained stochastic optimization problems is projected stochastic gradient descent (SGD)~\cite{nemirovski2009robust}.
However, in the case where we have infinite number of constraints, it is not clear how to apply the projection step.
To remedy this issue, many methods utilize alternating projections to tackle stochastic constraints by viewing them as an intersection of possibly infinite sets~\cite{patrascu2017nonasymptotic} (see Section \ref{sec:related} for a detailed discussion). 
For the special case when $A(\xi)$ is a vector-valued random variable, projection methods are efficient.
However, in the general case, applying projection with matrix-valued $A(\xi)$ may clearly impose a serious computational burden per iteration.

In this work, we take a radically different approach and use Nesterov's smoothing technique~\cite{nesterov2005smooth} for almost sure constraints instead of applying alternating projections.
In doing so, we avoid the need for projections to the constraints.
We make use of the stochastic gradients of $f(x, \xi)$, proximal operators of simple nonsmooth component $h(x)$ and simple projections to the set $b(\xi)$.

In a nutshell, our analysis technique combines ideas of smoothing and homotopy in the stochastic gradient framework.
We extend the previous analysis on smoothing with homotopy~\cite{tran2018smooth} to stochastic optimization with infinitely many constraints.
To our knowledge, this is the first application of smoothing for stochastic constraints.
Our contributions can be summarized as follows:

\begin{itemize}
\setlength\itemsep{.1mm}
\item We provide a simple stochastic gradient type algorithm which does not involve projections with matrix-valued random variables or heuristic parameter tuning.
\item We prove $\tilde{\mathcal{O}}(1/\sqrt{k})$ convergence rate for general convex objectives.
\item We prove $\tilde{\mathcal{O}}(1/{k})$ convergence rate for restricted strongly convex objectives.
\item We include generalizations of our framework for composite optimization with general nonsmooth Lispchitz continuous functions in addition to indicator functions.
\item We provide numerical evidence and verify our theoretical results in practice.
\end{itemize}
 
\textbf{Roadmap.} We recall the basic theoretical tools that we utilize and and lay out the notation in Section~\ref{sec: prelim}. 
The algorithm and its convergence guarantees are presented in Section~\ref{sec: alg_conv}. 
Section~\ref{sec: ext} shows how our results can be used to recover and extend previous works.
We review the related literature and compare our method with the existing ones in Section~\ref{sec:related}.
We conclude by presenting the practical performance of our method on three different problem instances in Section~\ref{sec: num}.
Proofs of the theoretical results are deferred to the appendix.

\section{Preliminaries}\label{sec: prelim}
\textbf{Notation.~} We use $\| \cdot\|$ to denote Euclidean norm and $\langle \cdot, \cdot \rangle$ to denote Euclidean inner product. The adjoint of a continuous linear operator is denoted by $^\top$. 
We will write a.s. in place of "almost surely" in the sequel.

We define the distance function to quantify the distance between a point $x$ and set $\mathcal{K}$ as $\dist{(x, \mathcal{K})}= \inf_{z\in\mathcal{K}} \| x-z\|$.
Given a function $\phi$, we use $\partial \phi(x)$ to denote its subdifferential at $x$.
For a given set $\mathcal{K}$, we denote the indicator function of the set by $\delta_{\mathcal{K}}(x) = 0$, if $x\in\mathcal{K}$ and $\delta_{\mathcal{K}}(x) = +\infty$ otherwise.
We define the support function of the set $\mathcal{K}$ as $\supp_{\mathcal K}(x) = \sup_{y\in \mathcal{K}}\langle x, y \rangle$.
The domain of a convex function $f$ is $\dom f = \{x\;:\; f(x) < +\infty\}$.
We use $\tilde{\mathcal{O}}$ notation to suppress the logarithmic terms.

We define the proximal operator of the convex function $\phi$ as
\begin{equation*}
\text{prox}_{\phi}(z) = \arg \min_{x} \phi(x) + \frac{1}{2} \| x-z\|^2.
\end{equation*}
We say that $\phi$ is a proximable function if computing its proximal operator is efficient.

Given a function $f$ and $L>0$, we say that $f$ is $L$-smooth if $\nabla f$ is Lipschitz continuous, which is defined as
\begin{equation*}
\| \nabla f(x) - \nabla f(y) \| \leq L  \| x - y\|, ~~ \forall x, y \in \mathbb{R}^d.
\end{equation*}
We say that the function $f$ is $\mu>0$ strongly convex if it satisfies,
\begin{equation*}
f(x) \geq f(y) + \langle \nabla f(y), x-y \rangle + \frac{\mu}{2} \| x - y \|^2, ~~ \forall x,y \in \mathbb{R}^d,
\end{equation*}
and, we say that the function $f$ is $\mu$-restricted strongly convex if there exists $x_\star$ such that,
\begin{equation*}
f(x) \geq f(x_\star) + \frac{\mu}{2} \| x - x_\star \|^2, ~~\forall x\in\mathbb{R}^d.
\end{equation*}
It is known that restricted strong convexity is a weaker condition than strong convexity since it is implied by strong convexity along the direction of the solution~\cite{Necoara2017,bolte2017error}.

\textbf{Space of random variables.~}
We will consider in this paper random variables of $\mathbb R^m$
belonging to the space
\[
\mathcal Y = \{ (y(\xi))_\xi \;:\;  \mathbb E[\norm{y(\xi)}^2] < + \infty \}
\]
We shall denote by $\mu$ the probability measure of the random variable $\xi$,
endowed with the scalar product
\[
\langle y, z \rangle = \mathbb E[y(\xi)^\top z(\xi)] =  \int y(\xi)^\top z(\xi) \mu(d\xi).
\]
$\mathcal Y$ is a Hilbert space and its norm is $\norm{y} = \sqrt{\mathbb E[\norm{y(\xi)}^2]}$.

\textbf{Smoothing.~} We are going to utilize Nesterov's smoothing framework to process almost sure linear constraints. 
Due to~\cite{nesterov2005smooth}, a smooth approximation of a nonsmooth convex function $g$ can be obtained as
\begin{equation}
\label{eq:nes_sm}
g_{\beta}(z) = \max_{u} \langle u, z \rangle - g^\ast(u) - \frac{\beta}{2} \| u \|^2,
\end{equation}
where $g^\ast(u) = \sup_{z} \langle z, u \rangle - g(z)$ is the Fenchel-conjugate of $g$ and $\beta > 0$ is the smoothness parameter.
As shown in~\cite{nesterov2005smooth}, $g_{\beta}$ is convex and $1/\beta$-smooth.

For the special case of indicator functions, $g(x) = \delta_{b}(x)$, where $b$ is a given convex set, $g^\ast(x) = \supp_{b}(x)$ and the smooth approximation is given by $g_{\beta}(z) = \frac{1}{2\beta}\dist{(z, b)^2}$.
Smoothing the indicator function is studied in~\cite{tran2018smooth} for the case of deterministic optimization, which we extend to the stochastic setting in this work.

\textbf{Duality.~} 
We define the stochastic function
\begin{equation*}
g(A(\xi)x, \xi) = \delta_{b(\xi)}(A(\xi)x).
\end{equation*}
Using basic probability arguments, Problem \eqref{eq: prob1} can be written equivalently as:
\begin{align*}
\min _{x\in\mathbb{R}^d} &  \mathbb{E}[ f(x, \xi)] + h(x) + \mathbb{E}[g(A(\xi)x, \xi)] =: P(x) \!+\! G(Ax)
\end{align*}
where $P(x) = \mathbb E[f(x, \xi)] + h(x)$, $A: \mathbb R^d \to \mathcal Y$ is the linear operator such that $(Ax)(\xi) = A(\xi)x$ for all $x$ and $G : \mathcal Y \to \mathbb R \cup \{+\infty\}$ 
is defined by
\[
G(z) = \int \delta_{b(\xi)}(z(\xi)) \mu(d \xi).
\]
We will assume that 
\begin{equation}\label{eq:norma}
\|A\|_{2,\infty} = \sup_\xi \|A(\xi) \| < +\infty,
\end{equation}
so that $A$ is in fact continuous.
Note that assuming a uniform bound on $\|A(\xi)\|$ is not restrictive since we can replace $A(\xi)x \in b(\xi)$ by 
\begin{equation*}
A'(\xi)x = \frac{A(\xi)x}{\|A(\xi)\|} \in b'(\xi) = \frac{b(\xi)}{\|A(\xi)\|},
\end{equation*}
without changing the set of vectors $x$ satisfying the constraint, and projecting onto $b'(\xi)$ is as easy as  projecting onto $b(\xi)$.

For the case of stochastic constraints, we define the Lagrangian $\mathcal L: \mathbb R^d \times \mathcal Y \to \mathbb R \cup \{+\infty \}$ as
\begin{equation*}
\mathcal{L}(x, y) = P(x) + \int \langle A(\xi)x, y(\xi) \rangle - \supp_{b(\xi)}(y(\xi))\mu(d\xi).
\end{equation*}

Using the Lagrangian, one can equivalently define primal and dual problems as 
\begin{equation*}
\min_{x\in\mathbb{R}^d} \max_{y\in\mathcal{Y}} \mathcal{L}(x, y),
\end{equation*}
and
\begin{equation*}
\max_{y\in\mathcal{Y}} \min_{x\in\mathbb{R}^d} \mathcal{L}(x, y). 
\end{equation*}
Strong duality refers to values of these problems to be equal.
It is known that Slater's condition is a sufficient condition for strong duality to hold~\cite{bauschke2011convex}.
In the context of duality in Hilbert spaces, Slater's condition refers to the following:
\begin{equation*}
0 \in \text{sri}(\dom G - A(\dom P))
\end{equation*}
where $\text{sri}(\cdot)$ refers to the strong relative interior of the set~\cite{bauschke2011convex}.
 
\textbf{Optimality conditions.~} We denote by $(x_\star, y_\star) \in \mathbb R^d \times \mathcal Y$   a saddle point of $\mathcal{L}(x, y)$.
For the constrained problem, we say that $x$ is an $\epsilon$-solution if it satisfies the following objective suboptimality and feasibility conditions
\begin{equation}\label{eq:opt_cond}
|P(x) - P(x_\star)| \leq \epsilon, ~~~ \sqrt{\mathbb{E}\left[ \dist(A(\xi)x, b(\xi))^2 \right]} \leq \epsilon.
\end{equation}

\section{Algorithm \& Convergence}\label{sec: alg_conv}
We derive the main step of our algorithm from smoothing framework.
The problem in~\eqref{eq: prob1} is nonsmooth both due to $h(x)$ and the constraints encoded in $g(A(\xi)x, \xi)$.
We keep $h(x)$ intact since it is assumed to be proximable, and smooth $g$  to get
\begin{equation}\label{eq: smoothed_prob}
P_{\beta}(x) = \mathbb{E}\left[ f(x, \xi) \right] + h(x) + \mathbb{E}\left[ g_{\beta}(A(\xi)x, \xi) \right],
\end{equation}
where $g_{\beta}(A(\xi)x, b(\xi)) = \frac{1}{2\beta} \dist(A(\xi)x, b(\xi))^2$.
We note that $P_{\beta}(x)$ is $L(\nabla F) + \frac{\|A\|_{2,2}^2}{\beta}$-smooth where 
\begin{equation*}
\|A\|_{2,2} = \sup_{x \neq 0} \frac{\sqrt{\mathbb E[\norm{A(\xi)x}^2]}}{\norm{x}} \leq \| A\| _{2, \infty},
\end{equation*}
$\| A\| _{2, \infty}$ being defined in~\eqref{eq:norma}.
Note that~\eqref{eq: smoothed_prob} can also be viewed as a quadratic penalty (QP) formulation.

The main idea of our method is to apply stochastic proximal gradient (SPG)~\cite{rosasco2014convergence} iterations to~\eqref{eq: smoothed_prob} by using homotopy on the smoothness parameter $\beta$.
Our algorithm has a double loop structure where for each value of $\beta$, we solve the problem~\eqref{eq: smoothed_prob} with SPG upto some accuracy.
This strategy is similar to inexact quadratic penalty (QP) methods which are studied for deterministic problems in~\cite{lan2013iteration}.
In stark contrast to inexact QP methods, Algorithm~\ref{alg: A1} has explicit number of iterations for the inner loop which is determined by theoretical analysis, avoiding difficult-to-check stopping criteria for the inner loop in standard inexact methods.
We decrease $\beta$ to $0$ according to update rules from theoretical analysis to ensure the convergence to the original problem~\eqref{eq: prob1} rather than the smoothed problem~\eqref{eq: smoothed_prob}.

In Algorithm~\ref{alg: A1}, we present our stochastic approximation method for almost surely constrained problems (SASC, pronounced as "sassy").
We note that Case 1 refers to parameters for general convex case and Case 2 refers to restricted strongly convex case.
\begin{algorithm}[ht!]
\begin{algorithmic}
\State $x_0^0 \in \mathbb R^d$
\State $\alpha_0 \leq \dfrac{3}{4 L(\nabla f)}$, and $\omega > 1$
\State Case 1: $m_0 \in \mathbb N_*$.
\State Case 2: $m_0 \geq \frac{\omega}{\mu \alpha_0}$.
\For{$s \in \mathbb N$}
\State $m_s = \lfloor m_0 \omega^s \rfloor$, and $\beta_s = 4 \alpha_s \norm{A}^2_{2, \infty}$
\State Case 1: $\alpha_s = \alpha_0 \omega^{-s/2}$.
\State Case 2: $\alpha_s = \alpha_0 \omega^{-s}$.
\For{$k \in \{0, \ldots, m_s-1\}$}
\State Draw $\xi = \xi^s_{k+1}$, and define $z = A(\xi)x^s_k$.
\State {\small $D(x_k^s, \xi) := \nabla f(x^s_k, \xi) + A(\xi)^\top \nabla_z g_{\beta_s}(A(\xi)x^s_k, \xi)$}
\State $x^s_{k+1} = \text{prox}_{\alpha_s h}\left( x^s_k - \alpha_s D(x_k^s, \xi)\right)$
\EndFor
\State $\bar x^s = \frac{1}{m_s} \sum_{k=1}^{m_s}x_{k}^s$
\State Case 1: $x_0^{s+1} = x^s_{m_s}$.
\State Case 2: $x_0^{s+1} = \bar{x}^s$.
\EndFor
\State \textbf{return} $\bar x^s$
\end{algorithmic}
\caption{SASC}
\label{alg: A1}
\end{algorithm}

It may look unusual at first glance that in the restricted strongly convex
case, the step size $\alpha_s$ is decreasing faster than in the general convex case. The reason is that restricted strong convexity allows us to
decrease faster the smoothness parameter $\beta_s$, and that the step size 
is driven by the smoothness of the approximation.

We will present a key technical lemma which is instrumental in our convergence analysis.
It will serve as a bridge to transfer bounds on the smoothness parameter and an auxiliary function that we define in~\eqref{eq: smooth_gap_func} to optimality results in the usual sense for constrained optimization, $i.e.$~\eqref{eq:opt_cond}.
This lemma can be seen as an extension of Lemma 1 from~\cite{tran2018smooth} to the case of almost sure constraints.
We first define the auxiliary function that we are going to utilize, which we name as the smoothed gap function
\begin{equation}\label{eq: smooth_gap_func}
S_{\beta}(x) = P_{\beta}(x)-P(x_\star).
\end{equation}

\begin{lemma}\label{lem: smooth_gap_lemma}
Let $(x_\star, y_\star)$ be a saddle point of
\begin{equation*}
\min_{x\in\mathbb{R}^d} \max_{y\in\mathcal{Y}} \mathcal{L}(x, y),
\end{equation*}
and note that $S_{\beta}(x) = P_{\beta}(x)-P(x_\star) = P(x) - P(x_\star) + \frac{1}{2\beta} \int \dist{(A(\xi)x, b(\xi))^2 \mu(d\xi)}$.
Then, the following hold:
\begin{align}
&S_{\beta}(x) \geq -\frac{\beta}{2}\|y_\star\|^2, \nonumber \\
&P(x) - P(x_\star) \geq -\frac{1}{4\beta} \!\!\int \!\!\dist{(A(\xi)x, b(\xi))^2}\mu(d\xi) - \beta \| y_\star \|^2, \nonumber \\
&P(x) - P(x_\star) \leq S_{\beta}(x), \nonumber \\
&\int \dist{(A(\xi)x, b(\xi))^2}\mu(d\xi) \leq 4\beta^2\| y_\star\|^2 + 4\beta S_{\beta}(x). \nonumber
\end{align}
\end{lemma}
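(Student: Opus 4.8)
The plan is to reduce all four claims to the single saddle-point inequality $\mathcal{L}(x, y_\star) \geq \mathcal{L}(x_\star, y_\star) = P(x_\star)$, fed through two complementary lower bounds on the smoothing term. Throughout I abbreviate $D^2 = \int \dist{(A(\xi)x, b(\xi))^2}\mu(d\xi)$, so that $S_\beta(x) = P(x) - P(x_\star) + \frac{1}{2\beta}D^2$. The third inequality, $P(x) - P(x_\star) \leq S_\beta(x)$, is then immediate since $D^2 \geq 0$. Before anything else I would record that a saddle point forces $x_\star$ to be feasible (otherwise $\max_y \mathcal{L}(x_\star, y) = +\infty$), whence $\int \supp_{b(\xi)}(y_\star(\xi))\mu(d\xi) = \int \langle A(\xi)x_\star, y_\star(\xi)\rangle\mu(d\xi)$ and $\mathcal{L}(x_\star, y_\star) = P(x_\star)$.

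For the first inequality I would use the variational characterization behind Nesterov's smoothing. Plugging the suboptimal choice $u = y_\star(\xi)$ into the maximum defining $g_\beta$ in \eqref{eq:nes_sm}, and recalling $g^\ast = \supp_{b(\xi)}$, gives the pointwise bound $\frac{1}{2\beta}\dist{(A(\xi)x, b(\xi))^2} \geq \langle y_\star(\xi), A(\xi)x\rangle - \supp_{b(\xi)}(y_\star(\xi)) - \frac{\beta}{2}\norm{y_\star(\xi)}^2$. Integrating over $\xi$ and adding $P(x) - P(x_\star)$, the right-hand side reassembles into $\mathcal{L}(x, y_\star) - P(x_\star) - \frac{\beta}{2}\norm{y_\star}^2$. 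The saddle-point inequality then yields $\mathcal{L}(x, y_\star) - P(x_\star) \geq 0$, so $S_\beta(x) \geq -\frac{\beta}{2}\norm{y_\star}^2$.

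For the second inequality I would instead lower-bound the support function by evaluating it at the projection: with $p(\xi) = \proj{b(\xi)}(A(\xi)x)$ and residual $r(\xi) = A(\xi)x - p(\xi)$ (so $\norm{r(\xi)} = \dist{(A(\xi)x, b(\xi))}$), we have $\supp_{b(\xi)}(y_\star(\xi)) \geq \langle p(\xi), y_\star(\xi)\rangle$. Rearranging $\mathcal{L}(x, y_\star) \geq P(x_\star)$ gives $P(x) - P(x_\star) \geq -\int \langle r(\xi), y_\star(\xi)\rangle\mu(d\xi)$, and a weighted Young inequality $\langle r(\xi), y_\star(\xi)\rangle \leq \frac{1}{4\beta}\norm{r(\xi)}^2 + \beta\norm{y_\star(\xi)}^2$ produces the second claim $P(x) - P(x_\star) \geq -\frac{1}{4\beta}D^2 - \beta\norm{y_\star}^2$. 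The fourth claim is then purely algebraic: substituting this lower bound into $S_\beta(x) = P(x) - P(x_\star) + \frac{1}{2\beta}D^2$ gives $S_\beta(x) \geq \frac{1}{4\beta}D^2 - \beta\norm{y_\star}^2$, and solving for $D^2$ yields $D^2 \leq 4\beta S_\beta(x) + 4\beta^2\norm{y_\star}^2$.

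The hard part will not be any individual estimate, since each is a one-line convex-duality manipulation, but rather making the pointwise-in-$\xi$ arguments rigorous in the Hilbert space $\mathcal{Y}$. In particular I would need to justify that the conjugate of $G$ decomposes as an integral of the fiberwise support functions, that the interchange of maximization and integration in $G(Ax) = \int \delta_{b(\xi)}(z(\xi))\mu(d\xi)$ is licit (requiring each $b(\xi)$ closed convex and $y_\star \in \mathcal{Y}$), and that all integrands are measurable and integrable so that integrating the pointwise inequalities is valid. The existence of the saddle point itself rests on the Slater condition stated in the preliminaries, which I would invoke to guarantee strong duality and hence $\mathcal{L}(x_\star, y_\star) = P(x_\star)$.
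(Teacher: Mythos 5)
Your proof is correct and follows essentially the same route as the paper's: the saddle-point inequality $\mathcal{L}(x, y_\star) \geq \mathcal{L}(x_\star, y_\star) = P(x_\star)$, the projection/residual decomposition of $A(\xi)x$ combined with the weighted Young inequality $\langle r(\xi), y_\star(\xi)\rangle \leq \frac{1}{4\beta}\|r(\xi)\|^2 + \beta\|y_\star(\xi)\|^2$ for the second and fourth claims, and evaluating the maximum defining $g_\beta$ at the suboptimal point $y_\star(\xi)$ for the first. The only cosmetic difference is that where the paper bounds $\langle \Pi_{b(\xi)}(A(\xi)x), y_\star(\xi)\rangle$ by routing through the optimality condition $A(\xi)x_\star \in \partial \supp_{b(\xi)}(y_\star(\xi))$, you obtain $\supp_{b(\xi)}(y_\star(\xi)) \geq \langle \Pi_{b(\xi)}(A(\xi)x), y_\star(\xi)\rangle$ directly from the definition of the support function, since the projection lies in $b(\xi)$ --- a slight streamlining of the same argument, not a different one.
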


The simple message of Lemma~\ref{lem: smooth_gap_lemma} is that if an algorithm decreases the smoothed gap function $S_{\beta}(x)$ and $\beta$ simultaneously, then it obtains approximate solutions to~\eqref{eq: prob1} in the sense of~\eqref{eq:opt_cond}, $i.e.$ it decreases feasibility and objective suboptimality. 

The main technical challenge of applying SPG steps to problem~\eqref{eq: smoothed_prob} with homotopy stems from the stochastic term due to constraints, which is
\begin{equation}\label{eq: stoc_cons}
\mathbb{E}[g_{\beta}(A(\xi)x, \xi)],
\end{equation}
with $g_{\beta}(A(\xi)x, b(\xi)) = \frac{1}{2\beta} \dist(A(\xi)x, b(\xi))^2$.

Even though this term is in a suitable form to apply SPG, its variance bound and Lipschitz constant of its gradient becomes worse and worse as $\beta_k \to 0$.
A naive solution for this problem would be to decrease $\beta_k$ slowly, so that these bounds will increase slowly so that they can be dominated by the step size.
Due to Lemma~\ref{lem: smooth_gap_lemma} however, the rate of decrease of $\beta_k$ directly determines the convergence rate, so a slowly decaying $\beta_k$ would result in slow convergence for the method.
Our proof technique carefully balances the rate of $\beta_k$ and the additional error terms due to using stochastic gradients of~\eqref{eq: stoc_cons}, so that the optimal rate of SPG is retained even with constraints.

We are going to to present the main theorems in the following two sections for general convex and restricted strongly convex objecives, respectively.
The main proof strategy in Theorem~\ref{th: nonsc_th} and Theorem~\ref{th: sc_th} is to analyze the convergence of $S_{\beta}(x)$ and $\beta_k$ and use Lemma~\ref{lem: smooth_gap_lemma} to translate the rates to objective residual and feasibility measures.

\subsection{Convergence for General Convex Objectives}  
In this section, we present the convergence results for solving~\eqref{eq: prob1} where only general convexity is assumed for the objective $P(x)$.
\begin{theorem}\label{th: nonsc_th}
Assume $F$ is convex and $L(\nabla F)$ smooth, 
and $\exists \sigma_f$ such that $\mathbb E[\norm{\nabla f(x,\xi) - \nabla F(x)}^2] \leq \sigma_f^2$.
Denote $M_s = \sum_{l=0}^s m_l$. Let us set $\omega > 1$, $\alpha_0 \leq \frac{3}{4L(\nabla f)}$, $m_0 \in \mathbb{N}_\ast$, $m_s = \lfloor m_0 \omega^s \rfloor$, and $\beta_s =4\alpha_s \| A\|^2_{2, \infty}$. Then, for all s,
\begin{align*}
&\mathbb E[P(\bar{x}^s) - P(x_\star)] \leq \frac{C_1}{\sqrt{M_s}} \left[ C_2 + \frac{\log(M_s/m_0)}{\log(\omega)}C_3 \right] \\
& \mathbb E[P(\bar{x}^s) - P(x_\star)] \geq  -\frac{2C_4}{\sqrt{M_s}}\|y_\star\|^2- \frac{C_1}{\sqrt{M_s}} \left[ C_2 + \frac{\log(M_s/m_0)}{\log(\omega)}C_3 \right] \\
&\sqrt{\mathbb{E}\left[ \dist(A(\xi)\bar{x}^s, b(\xi))^2 \right]} \leq \frac{1}{\sqrt{M_s}} \bigg[ 2C_4\|y_\star\| + 2 \sqrt{C_1 C_4}\sqrt{C_2 + \frac{\log(M_s/m_0)}{\log(\omega)}C_3} \bigg]
\end{align*}
where $C_1 = \frac{\sqrt{m_0\omega}}{\alpha_0 (m_0 -1)\sqrt{\omega-1}}$, $C_2 = \frac{\|x_\star - x_0^0\|^2}{2}+2\alpha_0 m_0 \sigma_f^2$, $C_3 = 2\alpha_0^2 \|A\|_{2,\infty}^2 m_0 \|y_\star \|^2 + 2\alpha_0 m_0 \sigma_f^2$ and  $C_4 = 4 \alpha_0 \sqrt{m_0}\|A\|_{2,\infty}^2 \frac{\sqrt{\omega}}{\sqrt{\omega-1}}$.
\end{theorem}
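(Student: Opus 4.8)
The plan is to run the standard stochastic proximal gradient (SPG) analysis \emph{within} each outer stage $s$, where $\beta_s$ is frozen, to produce a per-stage bound on the smoothed gap $S_{\beta_s}$ at the average iterate, then to chain the stages together using the homotopy schedule, and finally to translate the resulting bound on $S_{\beta_s}(\bar x^s)$ and $\beta_s$ into objective and feasibility guarantees through Lemma~\ref{lem: smooth_gap_lemma}.

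First I would fix a stage $s$, write $\Phi_s := \mathbb{E}[f(\cdot,\xi)] + \mathbb{E}[g_{\beta_s}(A(\xi)\cdot,\xi)]$ so that $P_{\beta_s}=\Phi_s+h$, and record that $P_{\beta_s}$ is $L_s$-smooth with $L_s\le L(\nabla F)+\tfrac{1}{4\alpha_s}$, using $\beta_s=4\alpha_s\|A\|_{2,\infty}^2$ and $\|A\|_{2,2}\le\|A\|_{2,\infty}$. Since $\alpha_0\le\tfrac{3}{4L(\nabla f)}$ and $L(\nabla F)\le L(\nabla f)$, this forces $\alpha_s L_s\le 1$, exactly the regime the SPG descent inequality needs. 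The direction $D(x_k^s,\xi)$ is unbiased for $\nabla\Phi_s(x_k^s)$, and $x_\star$ is feasible a.s., so $S_{\beta_s}(x_\star)=P_{\beta_s}(x_\star)-P(x_\star)=0$. Taking $x_\star$ as comparison point, the prox step then yields (schematically, with the gain read off at $x_k^s$) a one-step inequality of the form
\begin{equation*}
\mathbb{E}_k\|x_{k+1}^s-x_\star\|^2 \le \|x_k^s-x_\star\|^2 - 2\alpha_s\, S_{\beta_s}(x_k^s) + c\,\alpha_s^2\,\mathbb{E}_k\|e_k\|^2,
\end{equation*}
where $e_k=D(x_k^s,\xi)-\nabla\Phi_s(x_k^s)$.

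The crux is controlling $\mathbb{E}_k\|e_k\|^2$. I would split the noise into its $\nabla f$ part, bounded by $\sigma_f^2$, and its constraint part $A(\xi)^\top\nabla_z g_{\beta_s}$, and exploit the elementary identity $\|\nabla_z g_{\beta_s}(z,\xi)\|^2=\tfrac{2}{\beta_s}g_{\beta_s}(z,\xi)$ for the smoothed indicator together with $\|A(\xi)\|\le\|A\|_{2,\infty}$, giving $\mathbb{E}\|A(\xi)^\top\nabla_z g_{\beta_s}\|^2\le\tfrac{2\|A\|_{2,\infty}^2}{\beta_s}\,\mathbb{E}[g_{\beta_s}(A(\xi)x_k^s,\xi)]$. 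The choice $\beta_s=4\alpha_s\|A\|_{2,\infty}^2$ collapses the prefactor to $\tfrac{1}{2\alpha_s}$, so the variance contribution is proportional to $\mathbb{E}[g_{\beta_s}(x_k^s)]$. Feeding this back through $\mathbb{E}[g_{\beta_s}(x_k^s)]\le 2S_{\beta_s}(x_k^s)+2\beta_s\|y_\star\|^2$ — which follows from the second inequality of Lemma~\ref{lem: smooth_gap_lemma} — lets a fraction of the variance be absorbed by the descent term, leaving a \emph{net} negative multiple of $S_{\beta_s}(x_k^s)$ plus residuals of order $\alpha_s\beta_s\|y_\star\|^2$ and $\alpha_s^2\sigma_f^2$. \textbf{This balancing of the exploding variance against the shrinking $\beta_s$ is the main obstacle}, and it is precisely where the constant $4$ in $\beta_s$ and the bound $\alpha_0\le\tfrac{3}{4L(\nabla f)}$ (so that $c<1$ after absorption) are consumed.

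I would then sum over the inner iterations $k=0,\dots,m_s-1$, use convexity of $S_{\beta_s}$ (Jensen) to pass from the average of $S_{\beta_s}(x_k^s)$ to $S_{\beta_s}(\bar x^s)$, and arrive at a per-stage estimate $S_{\beta_s}(\bar x^s)\lesssim\tfrac{1}{\alpha_s m_s}\|x_0^s-x_\star\|^2+(\text{residuals})$. Because Case~1 sets $x_0^{s+1}=x_{m_s}^s$, the terminal squared distance of one stage seeds the next, so I would chain the inequalities over stages $0,\dots,s$; the schedule $\alpha_s=\alpha_0\omega^{-s/2}$, $m_s=\lfloor m_0\omega^s\rfloor$ makes the distance terms telescope while the per-stage residuals accumulate only a factor $\tfrac{\log(M_s/m_0)}{\log\omega}\sim s$, producing the bound $\tfrac{C_1}{\sqrt{M_s}}[C_2+\tfrac{\log(M_s/m_0)}{\log\omega}C_3]$ on $S_{\beta_s}(\bar x^s)$ with the stated $C_1,\dots,C_4$. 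Finally, since $\beta_s=4\alpha_s\|A\|_{2,\infty}^2$ is of the same $\tfrac{1}{\sqrt{M_s}}$ order as this bound, I would invoke the remaining inequalities of Lemma~\ref{lem: smooth_gap_lemma}: the third gives the upper bound on $P(\bar x^s)-P(x_\star)$; the second combined with the fourth gives the matching lower bound; and the fourth, after taking square roots and using $\sqrt{a+b}\le\sqrt a+\sqrt b$, gives the feasibility estimate.
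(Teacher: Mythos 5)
Your overall architecture is exactly the paper's: a per-stage SPG analysis of the smoothed gap $S_{\beta_s}$ with $\beta_s$ frozen, chaining across stages via the Case-1 restart $x_0^{s+1}=x^s_{m_s}$ and the lower bound $S_{\beta_s}(x)\geq -\tfrac{\beta_s}{2}\|y_\star\|^2$, a parameter-rate lemma for the schedule $\alpha_s=\alpha_0\omega^{-s/2}$, $m_s=\lfloor m_0\omega^s\rfloor$ (which is where the $\log$ factor arises), and finally Lemma~\ref{lem: smooth_gap_lemma} to convert. But the crux of your argument --- the variance absorption --- has a genuine gap with the theorem's stated constants. Your identity $\|\nabla_z g_{\beta_s}(z,\xi)\|^2=\tfrac{2}{\beta_s}g_{\beta_s}(z,\xi)$ is correct, but with $\beta_s=4\alpha_s\|A\|_{2,\infty}^2$ the prefactor collapses to exactly $2\alpha_s\|A\|_{2,\infty}^2\cdot\tfrac{2}{\beta_s}=1$, so the constraint-noise contribution in the function-gap recursion is exactly $G_{\beta_s}(Ax_k^s)$; and the feedback bound from Lemma~\ref{lem: smooth_gap_lemma} gives $G_{\beta_s}(Ax)\leq 2S_{\beta_s}(x)+2\beta_s\|y_\star\|^2$, where the coefficient on $S_{\beta_s}$ can be pushed toward $1$ by Young's inequality but never below it. The per-step inequality therefore reads $\mathbb{E}_k[S_{\beta_s}(x_{k+1}^s)]\leq\tfrac{1}{2\alpha_s}(\|x_k^s-x_\star\|^2-\mathbb{E}_k\|x_{k+1}^s-x_\star\|^2)+2S_{\beta_s}(x_k^s)+\cdots$, with feedback coefficient $2>1$: summing over $k$ then yields a \emph{lower} bound on $\sum_k S_{\beta_s}(x_k^s)$ rather than an upper bound on the average, i.e., the recursion is unstable. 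Your escape hatch ``$c<1$ after absorption'' is not available: the standard prox-error estimate (the paper's own derivation via the deterministic prox point $T_{\alpha_s g}(x_k^s)$ and nonexpansiveness) gives $c=2$, and even $c=1$ would leave zero net gain. There is also a secondary index mismatch you elide: the composite three-point inequality produces the gain at $x_{k+1}^s$ (since $h(x_{k+1}^s)$ appears), while your feedback term sits at $x_k^s$, so closing the sum leaves an uncontrolled boundary term $S_{\beta_s}(x_0^s)$ at each stage start, which this analysis does not bound for a last iterate.

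The paper avoids all of this with a different, pointwise cancellation that you do not use: applying cocoercivity of the $1/\beta_s$-smooth function $g_{\beta_s}(\cdot,\xi)$ at the pair $(A(\xi)x_k^s, A(\xi)x_\star)$, together with $\nabla_z g_{\beta_s}(A(\xi)x_\star,\xi)=0$ (feasibility of $x_\star$), yields the extra negative term $-\tfrac{\beta_s}{2}\|\nabla_z g_{\beta_s}(A(\xi)x_k^s,\xi)\|^2$ at the \emph{same} iterate, which exactly cancels the second-moment term $2\alpha_s\|A\|^2_{2,\infty}\mathbb{E}_k\|\nabla_z g_{\beta_s}(A(\xi)x_k^s,\xi)\|^2$ under the condition $2\alpha_s\|A\|^2_{2,\infty}\leq\tfrac{\beta_s}{2}$ --- which holds with equality for $\beta_s=4\alpha_s\|A\|_{2,\infty}^2$. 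This removes the constraint noise without any feedback through $S_{\beta_s}$ or $\|y_\star\|$, and is precisely what makes the factor $4$ (rather than something larger) and the condition $\alpha_0\leq\tfrac{3}{4L(\nabla f)}$ sufficient. Your route could in principle be repaired by taking $\beta_s>8\alpha_s\|A\|_{2,\infty}^2$ so that the feedback coefficient drops below $1$, but that proves a theorem with different constants than the one stated, and still requires handling the stage-boundary term $S_{\beta_s}(x_0^s)$.
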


Note that the $\mathcal{O}(1/\sqrt{k})$ rate is known to be optimal for solving~\eqref{eq: prob1} with SGD~\cite{polyak1992acceleration,agarwal2009information}.
In Theorem~\ref{th: nonsc_th}, we show that by handling infinite number of constraints without projections, we only lose a logarithmic factor from this rate.

\subsection{Convergence for Restricted Strongly Convex Objectives}
In this section, we assume $P(x)$ in~\eqref{eq: prob1} to be restricted strongly convex in addition to $F$ being $L(\nabla F)$ smooth. Note that requiring restricted strong convexity of $P(x)$ is substantially weaker than requiring strong convexity of component functions $f(x, \xi)$ or $h(x)$, see \cite{Necoara2017} for more details. In this setting, we have:
\begin{theorem}\label{th: sc_th}
	Assume $F$ is convex and $L(\nabla F)$ smooth, $P$ is $\mu$-restricted strongly convex
	and $\exists \sigma_f$ such that $\mathbb E[\norm{\nabla f(x,\xi) - \nabla F(x)}^2] \leq \sigma_f^2$.
	Denote $M_s = \sum_{l=0}^s m_l$. Let us set $\omega > 1$, $\alpha_0 \leq \frac{3}{4L(\nabla f)}$, $m_0 \geq \frac{\omega}{\mu \alpha_0}$, $m_s = \lfloor m_0 \omega^s \rfloor$, and $\beta_s =4\alpha_s \| A\|^2_{2, \infty}$. Then, for all s,
	\begin{align*}
	&\mathbb E[P(\bar{x}^s) - P(x_\star)]\leq  \frac{1}{M_s} \left[ D_1 + \frac{\log(M_s/m_0)}{\log(\omega)} D_2 \right] \\
	&\mathbb E[P(\bar{x}^s) - P(x_\star)] \geq 
	 -\frac{2 D_3}{M_s}\|y_\star\|^2 - \frac{1}{M_s} \left[ D_1 + \frac{\log(M_s/m_0)}{\log(\omega)}D_2 \right] \\
	&\sqrt{\mathbb{E}\left[ \dist(A(\xi)\bar{x}^s, b(\xi))^2 \right]} \leq 
	  \frac{1}{{M_s}} \left[ 2D_3\|y_\star\| + 2 \sqrt{D_3} \sqrt{D_1 + \frac{\log(M_s/m_0)}{\log(\omega)}D_2} \right]
	\end{align*}
	where $D_1 = \frac{\omega}{\omega-1}\frac{m_0}{\alpha_0(m_0-1)}\frac 12 \norm{x_0^0 - x_\star}^2 + 2 \alpha_0 m_0  \frac{\omega}{\omega-1} \sigma_f^2$, $D_2 = \frac{2m_0^2\alpha_0\omega}{(m_0-1)(\omega-1)}\Big(\|A\|^2_{2, \infty}\|y_\star\|^2 + \sigma_f^2\Big)$,
	$D_3 = 4 \alpha_0 {m_0}\|A\|^2_{2, \infty} \frac{{\omega}}{{\omega - 1}}$.
\end{theorem}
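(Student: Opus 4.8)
The plan is to run a stochastic proximal gradient (SPG) analysis on the smoothed objective $P_{\beta_s}$ inside each inner loop, track the smoothed gap $S_{\beta_s}$, chain the loops together via restricted strong convexity, and finally translate everything through Lemma~\ref{lem: smooth_gap_lemma}. First I would establish a one-step descent inequality for the iteration $x_{k+1}^s = \text{prox}_{\alpha_s h}(x_k^s - \alpha_s D(x_k^s,\xi))$. Since $D(x_k^s,\xi)$ is an unbiased estimator of $\nabla F_{\beta_s}(x_k^s)$, where $F_{\beta_s}(x) = F(x) + \mathbb{E}[g_{\beta_s}(A(\xi)x,\xi)]$ is $L_{\beta_s}$-smooth with $L_{\beta_s}\le L(\nabla F) + \norm{A}_{2,\infty}^2/\beta_s$, the choice $\beta_s = 4\alpha_s\norm{A}_{2,\infty}^2$ together with $\alpha_0\le 3/(4L(\nabla f))$ forces $\alpha_s L_{\beta_s}\le 1$, exactly the calibration a clean descent needs. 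Taking $x_\star$ as the comparison point, this yields a bound of the form $\mathbb{E}\norm{x_{k+1}^s-x_\star}^2 \le \mathbb{E}\norm{x_k^s-x_\star}^2 - 2\alpha_s\,\mathbb{E}[S_{\beta_s}(x_{k+1}^s)] + \alpha_s^2\,(\text{variance})$.

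The crux, which I expect to be the main obstacle, is controlling the variance of $D(x_k^s,\xi)$. Its $\nabla f$ component contributes $\sigma_f^2$, but the constraint component $A(\xi)^\top\nabla_z g_{\beta_s}(A(\xi)x_k^s,\xi)$ has second moment scaling like $1/\beta_s$, which blows up as $\beta_s\to 0$. The key identity is $\norm{\nabla_z g_\beta(z,b)}^2 = (2/\beta)\,g_\beta(z,b)$, letting me bound the constraint part of the variance by $(2\norm{A}_{2,\infty}^2/\beta_s)\,\mathbb{E}[g_{\beta_s}(A(\xi)x_k^s,\xi)]$. With $\beta_s = 4\alpha_s\norm{A}_{2,\infty}^2$ this equals $(1/(2\alpha_s))\,\mathbb{E}[g_{\beta_s}(\cdot)]$, so the $\alpha_s^2$-weighted variance is of order $\alpha_s\,\mathbb{E}[g_{\beta_s}(\cdot)]$, and since $\mathbb{E}[g_{\beta_s}(A(\xi)x)] = S_{\beta_s}(x) - (P(x)-P(x_\star)) \le S_{\beta_s}(x)$ it is dominated by the descent term $2\alpha_s\,\mathbb{E}[S_{\beta_s}]$. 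The part that cannot be absorbed is an irreducible term of order $\alpha_s\norm{A}_{2,\infty}^2\norm{y_\star}^2$, reflecting that the smoothed gradient relaxes the optimal dual $y_\star$; this is the source of the $\norm{A}_{2,\infty}^2\norm{y_\star}^2$ contribution in $D_2$, and this balancing of $\beta_s$ against the stochastic error is exactly what the paper advertises.

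Next I would sum the one-step inequality over $k=0,\dots,m_s-1$, telescope the distance terms, and apply Jensen's inequality to the average $\bar x^s$ to get $\mathbb{E}[S_{\beta_s}(\bar x^s)] \le \frac{1}{2\alpha_s m_s}\mathbb{E}\norm{x_0^s-x_\star}^2 + e_s$, where $e_s = \Theta(\alpha_s)(\sigma_f^2 + \norm{A}_{2,\infty}^2\norm{y_\star}^2)$ collects the per-step error. Restricted strong convexity now enters: combining $\frac{\mu}{2}\norm{\bar x^{s-1}-x_\star}^2 \le P(\bar x^{s-1})-P(x_\star)$ with the third inequality of Lemma~\ref{lem: smooth_gap_lemma} gives $\mathbb{E}\norm{x_0^s-x_\star}^2 = \mathbb{E}\norm{\bar x^{s-1}-x_\star}^2 \le \frac{2}{\mu}\mathbb{E}[S_{\beta_{s-1}}(\bar x^{s-1})]$. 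Because $\alpha_s m_s = \alpha_0 m_0$ up to the floor in $m_s$, and $m_0\ge \omega/(\mu\alpha_0)$, the factor $\frac{1}{\mu\alpha_s m_s}\le \frac{1}{\omega}$, so writing $a_s = \mathbb{E}[S_{\beta_s}(\bar x^s)]$ I obtain the contraction $a_s \le \frac{1}{\omega}a_{s-1} + e_s$.

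Finally I would unroll this recursion. Since $e_s = \Theta(\alpha_0\omega^{-s})$ decays at the same geometric rate as the contraction factor $\omega^{-s}$, the summation produces an extra linear factor in $s$, giving $a_s \lesssim \omega^{-s}(a_0 + s\cdot\alpha_0 C)$ with $C = \sigma_f^2 + \norm{A}_{2,\infty}^2\norm{y_\star}^2$. Substituting $\omega^{-s}\asymp \frac{m_0\omega}{(\omega-1)M_s}$ and $s\asymp \frac{\log(M_s/m_0)}{\log\omega}$ turns this into $\mathbb{E}[S_{\beta_s}(\bar x^s)] \le \frac{1}{M_s}\big[D_1 + \frac{\log(M_s/m_0)}{\log\omega}D_2\big]$, with the floor corrections accounting for the $m_0/(m_0-1)$ factors in $D_1,D_2$. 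It then remains only to invoke Lemma~\ref{lem: smooth_gap_lemma}: its third inequality gives the objective upper bound directly; its fourth gives $\sqrt{\mathbb{E}[\dist(A(\xi)\bar x^s,b(\xi))^2]}\le 2\beta_s\norm{y_\star} + 2\sqrt{\beta_s a_s}$, yielding the feasibility bound once $\beta_s M_s\to D_3$; and combining its second and fourth inequalities gives $\mathbb{E}[P(\bar x^s)-P(x_\star)]\ge -2\beta_s\norm{y_\star}^2 - a_s$, which is the stated lower bound.
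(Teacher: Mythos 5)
Your overall architecture matches the paper's proof: a one-step SPG inequality for $P_{\beta_s}$ under the calibration $\alpha_s L_{\beta_s}\le 1$, per-stage bounds on the smoothed gap with Jensen on $\bar x^s$, a stage-wise contraction with factor $1/\omega$ driven by $m_0\ge \omega/(\mu\alpha_0)$ (the paper runs this recursion on $\mathbb{E}\|x_0^s-x_\star\|^2$ via its inequality \eqref{eq: smoothed_obj_lowerbound_sc}, you run it on the gaps $a_s$ — an immaterial bookkeeping difference), the resonance of the geometric error $e_s=\Theta(\omega^{-s})$ with the contraction producing the $\log(M_s/m_0)/\log\omega$ factor, and the final translation through Lemma~\ref{lem: smooth_gap_lemma}, which you apply exactly as the paper does.

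However, the step you yourself identify as the crux — controlling the constraint-gradient variance — contains a genuine error, and as sketched it does not close at the stated parameters. Your inequality $\mathbb{E}[g_{\beta_s}(A(\xi)x,\xi)]\le S_{\beta_s}(x)$ is false in general: it is equivalent to $P(x)\ge P(x_\star)$, which fails for infeasible iterates; the second inequality of Lemma~\ref{lem: smooth_gap_lemma} exists precisely because $P(x)-P(x_\star)$ can be negative. The correct consequence of the lemma (its fourth inequality) is $\mathbb{E}[g_{\beta_s}(A(\xi)x,\xi)]\le 2S_{\beta_s}(x)+2\beta_s\|y_\star\|^2$, and the factor $2$ is fatal to your absorption: with your identity $\|\nabla_z g_{\beta_s}\|^2=(2/\beta_s)\,g_{\beta_s}$ and $\beta_s=4\alpha_s\|A\|^2_{2,\infty}$, the per-step variance contribution $2\alpha_s\|A\|^2_{2,\infty}\,\mathbb{E}\|\nabla_z g_{\beta_s}(A(\xi)x_k^s,\xi)\|^2$ equals exactly $\mathbb{E}[g_{\beta_s}(A(\xi)x_k^s,\xi)]\le 2S_{\beta_s}(x_k^s)+2\beta_s\|y_\star\|^2$, which is \emph{twice} the descent $S_{\beta_s}(x_{k+1}^s)$ that the one-step inequality makes available (and at the wrong iterate index besides), so after summing over $k$ the gap terms on the right dominate those on the left and nothing useful telescopes.

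The paper never needs this absorption. In \eqref{eq: g_beta_lips} it applies the lower bound valid for convex functions with $1/\beta_s$-Lipschitz gradient at the point $A(\xi)x_\star$, where $\nabla_z g_{\beta_s}(A(\xi)x_\star,\xi)=0$ by feasibility of $x_\star$; this produces a free negative term $-\frac{\beta_s}{2}\|\nabla_z g_{\beta_s}(A(\xi)x_k^s,\xi)\|^2$ for the \emph{same} sample $\xi$, and the choice $\beta_s=4\alpha_s\|A\|^2_{2,\infty}$ makes $2\alpha_s\|A\|^2_{2,\infty}-\frac{\beta_s}{2}=0$, so the variance term from \eqref{eq: error_terms} is cancelled exactly, with no appeal to Lemma~\ref{lem: smooth_gap_lemma} at the one-step level. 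The $\|y_\star\|^2$ contribution to $D_2$ then arises only at the restarts, from $S_{\beta_s}(x)\ge -\frac{\beta_s}{2}\|y_\star\|^2+\frac{\mu}{2}\|x-x_\star\|^2$, not from the variance — your sketch attributes it to the wrong mechanism. Your route could be salvaged by taking $\beta_s=c\,\alpha_s\|A\|^2_{2,\infty}$ with $c>8$, so the function-value bound is absorbed with a strict coefficient margin (after handling the $x_k$-versus-$x_{k+1}$ mismatch), but that would prove a variant with different constants, not the theorem as stated with $\beta_s=4\alpha_s\|A\|^2_{2,\infty}$.
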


Similar comments to Theorem~\ref{th: nonsc_th} can be made for Theorem~\ref{th: sc_th}.
We obtain $\mathcal{O}(\log(k)/k)$ convergence rate for both objective residual and feasibility under restricted strong convexity assumption, which is optimal up to a logarithmic factor for solving~\eqref{eq: prob1} even without constraints.

\section{Extensions}\label{sec: ext}
In this section, we present basic extensions of our framework to illustrate its flexibility.

We can extend our method for solving problems considered in~\cite{ouyang2012stochastic}:
\begin{equation}\label{eq: nonsmooth_lips}
\min_{x\in\mathbb{R}^d} P_u(x) := \mathbb{E}\left[ f(x, \xi) + g(A(\xi)x, \xi) \right] + h(x),
\end{equation}
where the assumptions for $f$ and $h$ are the same as~\eqref{eq: prob1} and $g$ is not an indicator function, but is Lipschitz continuous in the sense that
\begin{equation*}
\vert g(x, \xi) - g(y, \xi) \vert \leq L_g \| x-y\|, \forall x,y\in\mathbb{R}^d, \forall \xi.
\end{equation*}
This assumption is equivalent to $\text{dom}(g^\ast)$ being bounded~\cite{bauschke2011convex}, where $g^\ast$ is the Fenchel-conjugate function of $g(\cdot, \xi)$.
This special case with $h(x)=0$ is studied in~\cite{ouyang2012stochastic} with the specific assumptions in this section.
Inspired by~\cite{nesterov2005smooth}, it has been shown in~\cite{ouyang2012stochastic}, that one has the following bound for the smooth approximation of $g(\cdot, \xi)$ in the sense of~\eqref{eq:nes_sm}
\begin{equation}
\label{eq:stoc_sm_bound}
\mathbb{E}[g(A(\xi)x, \xi)] \leq \mathbb{E}[g_{\beta}(A(\xi)x, \xi)] + \frac{\beta}{2}L_g^2.
\end{equation}
We illustrate that we can couple our main results with~\eqref{eq:stoc_sm_bound} to recover the guarantees of~\cite{ouyang2012stochastic} with the addition of the nonsmooth proximable term $h(x)$.
\begin{corollary}
\label{cor:lips}
Denote by $x_\star$ a solution of~\eqref{eq: nonsmooth_lips}.\\
(a) Under the same assumptions as Theorem~\ref{th: nonsc_th}, and Lipschitz continuous $g(\cdot, \xi)$, one has
\begin{align*}
\mathbb{E} [P_u(\bar{x}^s) - P_u(x_\star)] &\leq \frac{C_1}{\sqrt{M_s}} \left[ C_2 + \frac{\log(M_s/m_0)}{\log(\omega)}C_3 \right] + \frac{C_4}{\sqrt{M_s}}L_g^2,
\end{align*}
where the constants $C_1, C_2, C_3, C4$ are defined in Theorem~\ref{th: nonsc_th}.\\
(b) Under the same assumptions as Theorem~\ref{th: sc_th}, and Lipschitz continuous $g(\cdot, \xi)$, one has
\begin{align*}
\mathbb{E} [P_u(\bar{x}^s) - P_u(x_\star)] &\leq \frac{1}{M_s} \left[ D_1 + \frac{\log(M_s/m_0)}{\log(\omega)} D_2 \right] + \frac{D_3}{{M_s}}L_g^2,
\end{align*}
where the constants $D_1, D_2, D_3$ are defined in Theorem~\ref{th: sc_th}.
\end{corollary}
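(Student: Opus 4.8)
The plan is to reduce both parts of the corollary to the corresponding theorem, exploiting the fact that Algorithm~\ref{alg: A1} applied to~\eqref{eq: nonsmooth_lips} is \emph{identical} to the algorithm applied to the smoothed surrogate $P_{\beta}(x) = \mathbb{E}[f(x,\xi)] + h(x) + \mathbb{E}[g_{\beta}(A(\xi)x,\xi)]$. The iterates depend only on $\nabla f$, $\prox{\alpha_s h}$ and $\nabla_z g_{\beta_s}$, and the last of these is $1/\beta_s$-smooth regardless of whether $g(\cdot,\xi)$ is an indicator or a general $L_g$-Lipschitz function. Hence every inequality in the proofs of Theorem~\ref{th: nonsc_th} and Theorem~\ref{th: sc_th} that concerns the smoothed objective and the saddle point $(x_\star,y_\star)$ remains valid verbatim. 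The only place the indicator structure was used is through feasibility of $x_\star$, i.e. $\mathbb{E}[g_{\beta_s}(A(\xi)x_\star,\xi)] = 0$; for general $g$ this is replaced by the Nesterov inequalities $g_{\beta_s}(\cdot,\xi) \le g(\cdot,\xi)$ and $g_{\beta_s}(z,\xi) \ge \langle y_\star(\xi), z\rangle - g^\ast(y_\star(\xi),\xi) - \tfrac{\beta_s}{2}\|y_\star(\xi)\|^2$, which is precisely the source of the $\|y_\star\|^2$ contributions already present in $C_3$ and $D_2$.

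Concretely, I would first isolate from those proofs the intermediate estimate they actually establish, namely a bound on the expected smoothed gap
\[
\mathbb{E}\big[P_{\beta_s}(\bar{x}^s) - P_{\beta_s}(x_\star)\big] \le \frac{C_1}{\sqrt{M_s}}\Big[ C_2 + \tfrac{\log(M_s/m_0)}{\log(\omega)} C_3 \Big]
\]
in the general convex case, and the analogous $\tfrac{1}{M_s}\big[D_1 + \tfrac{\log(M_s/m_0)}{\log(\omega)} D_2\big]$ bound in the restricted strongly convex case. In the indicator setting $P_{\beta_s}(x_\star) = P(x_\star)$, so this is exactly the bound on $\mathbb{E}[S_{\beta_s}(\bar{x}^s)]$ that underlies the theorems (via the third inequality of Lemma~\ref{lem: smooth_gap_lemma}); the point is that the same derivation, with $x_\star$ as comparison point in $P_{\beta_s}$, yields the displayed inequality unchanged for Lipschitz $g$.

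Next I would sandwich $P_u$ between its smoothed surrogate at both ends. Since $g(\cdot,\xi)$ is $L_g$-Lipschitz, \eqref{eq:stoc_sm_bound} gives $P_u(\bar{x}^s) \le P_{\beta_s}(\bar{x}^s) + \tfrac{\beta_s}{2}L_g^2$ pointwise, while $g_{\beta_s} \le g$ gives $P_{\beta_s}(x_\star) \le P_u(x_\star)$. Combining,
\[
\mathbb{E}\big[P_u(\bar{x}^s) - P_u(x_\star)\big] \le \mathbb{E}\big[P_{\beta_s}(\bar{x}^s) - P_{\beta_s}(x_\star)\big] + \tfrac{\beta_s}{2}L_g^2 ,
\]
so it remains only to absorb $\tfrac{\beta_s}{2}L_g^2$ into the advertised extra term. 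For this I use $M_s = \sum_{l=0}^s \lfloor m_0\omega^l\rfloor \le m_0\tfrac{\omega^{s+1}}{\omega-1}$ together with $\beta_s = 4\alpha_s\|A\|_{2,\infty}^2$. In case (a), where $\alpha_s = \alpha_0\omega^{-s/2}$, this gives $\tfrac{\beta_s}{2} = 2\alpha_0\omega^{-s/2}\|A\|_{2,\infty}^2 \le C_4/\sqrt{M_s}$; in case (b), where $\alpha_s = \alpha_0\omega^{-s}$, it gives $\tfrac{\beta_s}{2} = 2\alpha_0\omega^{-s}\|A\|_{2,\infty}^2 \le D_3/M_s$. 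Substituting the internal estimate then yields (a) and (b) respectively.

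The main obstacle is the first step: confirming that the internal estimate of each theorem is genuinely a bound on the smoothed gap $\mathbb{E}[P_{\beta_s}(\bar{x}^s) - P_{\beta_s}(x_\star)]$ with the \emph{same} constants, i.e. that nowhere in those proofs was the indicator structure of $g$ invoked beyond the two generic Nesterov inequalities above and the feasibility identity they replace. Everything after that is the elementary two-line sandwich and the arithmetic comparisons $\tfrac{\beta_s}{2}\le C_4/\sqrt{M_s}$ and $\tfrac{\beta_s}{2}\le D_3/M_s$, which follow immediately from the closed-form upper bound on $M_s$. Note finally that, since $\dom(g^\ast(\cdot,\xi))$ is bounded by $L_g$, the dual optimum satisfies $\|y_\star\| \le L_g < +\infty$, so all the constants remain finite.
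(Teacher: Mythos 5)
Your route coincides with the paper's own (one-line) proof: the paper never gives the corollary a separate argument beyond exactly the coupling you describe --- take the internal smoothed-gap estimate $\mathbb{E}[P_{\beta_S}(\bar{x}^S) - P_{\beta_S}(x_\star)]$ established inside the proofs of Theorems~\ref{th: nonsc_th} and~\ref{th: sc_th}, sandwich $P_u$ via $g_{\beta} \le g \le g_{\beta} + \tfrac{\beta}{2}L_g^2$ (i.e.~\eqref{eq:stoc_sm_bound}), and absorb $\tfrac{\beta_S}{2}L_g^2$ using $\beta_S \le C_4/\sqrt{M_S}$ (resp.\ $\beta_S \le D_3/M_S$) from Lemmas~\ref{lem: nonsc_param_lemma} and~\ref{lem: sc_param_lemma}. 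Your sandwich step, the $\beta_s$ arithmetic, and the observation $\|y_\star\| \le L_g$ (from boundedness of $\dom{g^\ast}$) are all correct; your replacement of the restart lower bound~\eqref{eq: smoothed_obj_lowerbound} by the second Nesterov inequality also goes through, since $P_{\beta_s}(x) \ge \mathcal{L}(x, y_\star) - \tfrac{\beta_s}{2}\|y_\star\|^2 \ge P_u(x_\star) - \tfrac{\beta_s}{2}\|y_\star\|^2 \ge P_{\beta_s}(x_\star) - \tfrac{\beta_s}{2}\|y_\star\|^2$.

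However, the verification you flag as ``the main obstacle'' does not check out as you assert, and this is a genuine gap. The indicator structure enters the proof of Lemma~\ref{lem: nonsc_smoothed_gap_lemma} in a \emph{third} place, beyond your two generic Nesterov inequalities: in~\eqref{eq: g_beta_lips}, feasibility gives $\nabla_z g_{\beta_s}(A(\xi)x_\star, \xi) = 0$, which turns the cocoercivity remainder into $-\tfrac{\beta_s}{2}\|\nabla_z g_{\beta_s}(A(\xi)x_k^s, \xi)\|^2$, and it is precisely this term that cancels the constraint-gradient variance $2\alpha_s \|A\|_{2,\infty}^2 \mathbb{E}_k\big[\|\nabla_z g_{\beta_s}(A(\xi)x_k^s, \xi)\|^2\big]$ from~\eqref{eq: error_terms} through the coupling $2\alpha_s\|A\|_{2,\infty}^2 - \tfrac{\beta_s}{2} \le 0$. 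For a general Lipschitz $g$ one has $\nabla_z g_{\beta_s}(A(\xi)x_\star, \xi) \neq 0$, the remainder is $-\tfrac{\beta_s}{2}\|\nabla_z g_{\beta_s}(A(\xi)x_k^s, \xi) - \nabla_z g_{\beta_s}(A(\xi)x_\star, \xi)\|^2$, and the cancellation fails; so the theorems' internal estimate does \emph{not} transfer ``verbatim with the same constants,'' contrary to your first step. The natural repair is to use boundedness instead of cancellation: the gradient of $g_{\beta_s}$ is the maximizer in~\eqref{eq:nes_sm}, hence lies in $\dom{g^\ast}$ and satisfies $\|\nabla_z g_{\beta_s}\| \le L_g$, so after dropping the nonpositive cocoercivity remainder the variance term is bounded by $2\alpha_s \|A\|_{2,\infty}^2 L_g^2 = \tfrac{\beta_s}{2}L_g^2$ per inner iteration. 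But this extra term propagates through the restart recursion~\eqref{eq: iter_rec1}, contributing $\alpha_s m_s \beta_s L_g^2$ per stage and hence a $\tfrac{\log(M_s/m_0)}{\log(\omega)}$-weighted $L_g^2$ term in the final bound: effectively $C_3$ and $D_2$ are inflated by an additive $O\big(\alpha_0^2 \|A\|_{2,\infty}^2 m_0 L_g^2\big)$, rather than everything landing in the single trailing $\tfrac{C_4}{\sqrt{M_s}}L_g^2$ (resp.\ $\tfrac{D_3}{M_s}L_g^2$) term. The $\tilde{\mathcal{O}}(1/\sqrt{M_s})$ and $\tilde{\mathcal{O}}(1/M_s)$ rates survive, and since $\|y_\star\| \le L_g$ the corrected bound has the same shape --- but your proof as written (and, to be fair, the corollary's literal constants, which the paper never verifies in the appendix) needs this extra accounting.
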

Lastly, we can combine the problem template in~\eqref{eq: prob1} with~\eqref{eq: nonsmooth_lips} to arrive at the problem
\begin{align*}
\min_{x\in\mathbb{R}^d} \mathbb{E}\left[ f(x, \xi) + g_1(A_1(\xi)x, \xi) \right] + h(x), \\
A_2(\xi)x \in b(\xi), \xi \text{-almost surely} \nonumber
\end{align*}
where $g_1$ is a Lipschitz continuous function and we have the same assumptions as~\eqref{eq: prob1} for almost sure constraints.
Arguments in Corollary~\ref{cor:lips} can be combined in a straightforward way with our results from Section~\ref{sec: alg_conv} for solving this template.

\section{Related Works}
\label{sec:related}

The most prominent work for stochastic optimization problems is stochastic gradient descent (SGD)~\cite{nemirovski2009robust,moulines2011non,polyak1992acceleration}. Even though SGD is very well studied, it only applies when there does not exist any constraints in the problem template~\eqref{eq: prob1}.
For the case of simple constraints, $i.e.$ $h(x) = \delta_{\mathcal{K}}(x)$ in~\eqref{eq: prob1} and almost sure constraints are not present, projected SGD can be used~\cite{nemirovski2009robust}.
However, it requires $\mathcal{K}$ to be a projectable set, which does not apply to the general template of~\eqref{eq: prob1} which would involve the almost sure constraints in the definition of $\mathcal{K}$.
In the case where $h(x)$ in~\eqref{eq: prob1} is a nonsmooth proximable function~\cite{rosasco2014convergence} studied the convergence of stochastic proximal gradient (SPG) method which utilizes stochastic gradients of $f(x, \xi)$ in addition to the proximal operator of $h(x)$. 
This method generalize projected SGD, however, they cannot handle infinitely many constraints that we consider in~\eqref{eq: prob1} since it is not possible to project onto their intersection in general.

A line of work that is known as alternating projections, focus on applying random projections for solving problems that are involving the intersection of infinite number of sets.
In particular, these methods focus on the following template
\begin{equation}\label{eq: cvx_feas}
\min_{x\in\mathbb{R}^d} \mathbb{E}\left[f(x, \xi)\right]: ~~ x \in {\mathcal B} (:= \cap_{\xi  \in \Omega} {\mathcal B}(\xi)).
\end{equation}
Here, the feasible set ${\mathcal B}$ consists of the intersection of a possibly infinite number of convex sets. The case when $f(x, \xi) = 0$ which corresponds to the convex feasibility problem is studied in~\cite{necoara2018randomized}. For this particular setting, the authors combine the smoothing technique with minibatch SGD, leading to a stochastic alternating projection algorithm having linear convergence. 

The most related to our work is~\cite{patrascu2017nonasymptotic} where the authors apply a proximal point type  algorithm with alternating projections.  The main idea behind~\cite{patrascu2017nonasymptotic} is to apply smoothing to $f(x, \xi)$ and apply  stochastic gradient steps to the smoothed function, which corresponds to a stochastic proximal point type of update, combined with alternating projection steps. 
The authors show $\mathcal{O}(1/\sqrt{k})$ rate for general convex objectives and $\mathcal{O}(1/k)$ for smooth and strongly convex objectives. 
For strongly convex objectives,~\cite{patrascu2017nonasymptotic} requires smoothness of the objective which renders their results not applicable to our composite objective function in~\eqref{eq: prob1}. 
In addition, they require strong convexity of the objective function while our results are valid for a more relaxed  strong convexity assumption.
Lastly,~\cite{patrascu2017nonasymptotic} assumes the projectability of individual sets, whereas in our case, the constraints $A(\xi) x \in b(\xi)$ might not be projectable unless $A(\xi)$ and $b(\xi)$ are of very small dimension since the projection involves solving a linear system at each iteration.

Stochastic forward-backward algorithms can also be applied to solve~\eqref{eq: prob1}. 
However, the papers introducing those very general algorithms focused on proving convergence and did not present convergence rates~\cite{bianchi2015stochastic,bianchi2017constant,salim2018random}.
There are some other works that focus on~\eqref{eq: cvx_feas}~\cite{wang2015random,mahdavi2013stochastic,yu2017online} where the authors assume the number of constraints is finite, which is more restricted than our setting.

In the case where the number of constraints in~\eqref{eq: prob1} is finite and the objective function is deterministic, Nesterov's smoothing framework is studied in~\cite{tran2018smooth, van2017smoothing, tran2018adaptive} in the setting of accelerated proximal gradient methods.
These methods obtain $\mathcal{O}(1/k)$ ($\mathcal{O}(1/k^2)$) rate when the number of constraints is finite and $F(x)$ is a (strongly) convex function whose gradient $\nabla F$ can be computed. 

Another related work is~\cite{ouyang2012stochastic} where the authors apply Nesterov's smoothing to~\eqref{eq: nonsmooth_lips}.
However, this work does not apply to~\eqref{eq: prob1}, due to the Lipschitz continuous assumption on $g(\cdot, \xi)$. 
Note that in our main template~\eqref{eq: prob1}, $g(\cdot, \xi) = \delta_{b(\xi)}(\cdot)$, which is not Lipschitz continuous.

\section{Numerical Experiments}\label{sec: num}

We present numerical experiments on a basis pursuit problem on synthetic data, a hard margin SVM problem on the \texttt{kdd2010, rcv1, news20} datasets from~\cite{chang2011libsvm} and a portfolio optimization problem on \texttt{NYSE, DJIA, SP500, TSE} datasets from~\cite{borodin2004can}.

\subsection{Sparse regression with basis pursuit on synthetic data}
In this section, we consider the basis pursuit problem which is widely used in machine learning and signal processing applications~\cite{donoho2006compressed,arora2018compressed}:
\begin{align}\label{eq:bp}
\min_{x \in \mathbb R^{d}} & \;\norm{x}_1 \\
\text{st: } & a^\top x = b,  a.s. \notag
\end{align}
where $a\in\mathbb{R}^d$, $b\in\mathbb{R}$.
\begin{figure*}[t!]
\centering
\includegraphics[width=0.4\columnwidth]{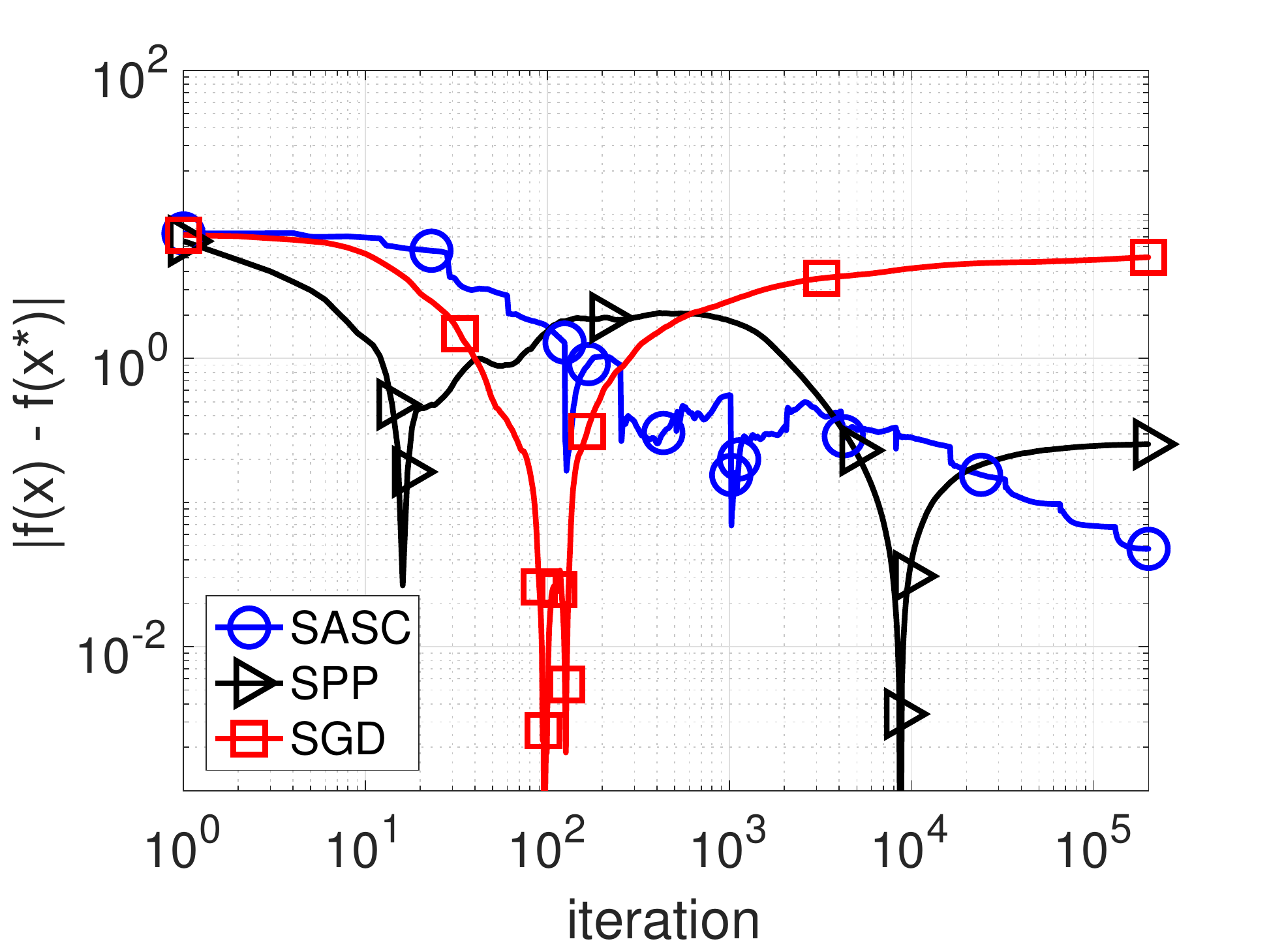}
\qquad 
\includegraphics[width=0.4\columnwidth]{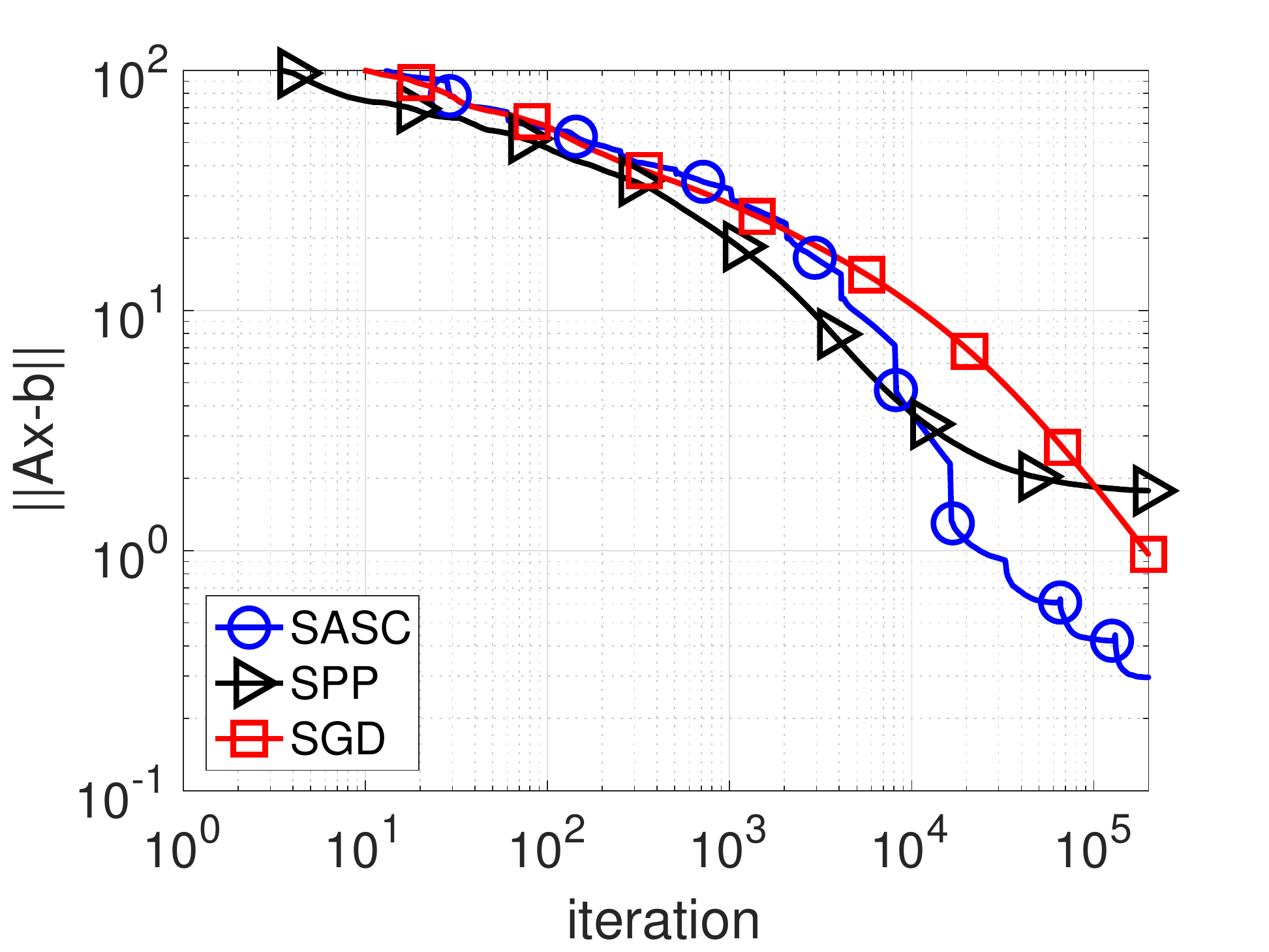}
\caption{Performance of SGD, SPP and SASC on synthetic basis pursuit problem.}
\label{fig:bp}
\end{figure*}
We consider the setting where the measurements $a$ arrive in a streaming fashion, similar to~\cite{garrigues2009homotopy}.
For generating the data, we defined $\Sigma$ as the matrix such that $\Sigma_{i,j} = \rho^{\abs{i-j}}$ with $\rho = 0.9$.
We generated a random vector $x^* \in \mathbb R^{d}$, $d=100$ with $10$ nonzero coefficients
and independent $\mathcal N(0, \Sigma)$ random variables $a_i$
which are then centered and normalized. We also define $b_i = a_i^\top x^*$.
Because of the centering, there are multiple solutions to the infinite system $a^\top x = b$ a.s., 
and we wish to recover $x^*$ as the solution of the basis pursuit problem~\eqref{eq:bp}.
We compare SASC (Algorithm~\ref{alg: A1}), SGD~\cite{nemirovski2009robust} and SPP~\cite{patrascu2017nonasymptotic}.
We manually tuned the step sizes for the methods and included the best obtained results. 
Since the basis pursuit problem does not possess (restricted) strong convexity, we use the parameters from Case 1 in SASC and a fixed step size $\mu$ for SPP which is used for the analysis in Corollary 6 in~\cite{patrascu2017nonasymptotic}.
We used the parameters $\mu=10^{-5}$ for SPP, $m_0 = 2$, $\omega=2$, $\alpha_0 = 10^{-2} \| a_1 b_1 \|_{\infty}$, where $a_1$ is the first measurement and $b_1$ is the corresponding result.
We take $n=10^5$ and make two passes over the data.
Figure~\ref{fig:bp} illustrates the behaviour of the algorithms for the synthetic basis pursuit problem. 
We can observe that SASC does exhibit a $\tilde O(1/\sqrt{k})$ convergence in feasibility and objective suboptimality. 
The stair case shape of the curves comes from the double-loop nature of the method. 
SPP can also solve this problem since the projection onto a hyperplane is easy to do when the constraints are processed one by one.
As observed in Figure~\ref{fig:bp}, SPP reaches to that accuracy almost as fast as SASC, however, it will stagnate once it reaches the pre-determined accuracy since the fixed step size $\mu$ determines the accuracy that the algorithm will reach.
We also tried running SGD on $\min_x \frac 12 \mathbb E(\norm{a^\top x- b}^2_2)$ but this leads to non-sparse solutions, therefore SGD converges to another solution than SASC and SPP.

A common technique that is used in stochastic optimization is to use mini-batches to parallelize and speed up computations.
Since SPP utilizes projections at each iteration, it needs to project onto linear constraints each iteration.
When the data is processed in mini-batches, this will require matrix inversions of sizes equal to mini-batches. On the other hand, SASC 
can handle mini-batches without any overhead.

\subsection{Portfolio optimization}
In this section, we consider Markowitz portfolio optimization with the task of maximizing the expected return given a maximum bound on the variance~\cite{abdelaziz2007multi}.
The precise formulation we consider is the following:
\begin{align}
\label{eq:portfolio}
\min_{x\in \mathbb{R}^d} -\langle a_{avg}, x \rangle:&
\sum_{i=1}^d x_i = 1 \\ &\vert \langle a_i - a_{avg}, x \rangle \vert \leq \epsilon, \forall i \in [1, n] \notag,
\end{align}
where short positions are allowed and $a_{avg} = \mathbb{E}[a_i]$ is assumed to be known. 

This problem fits to our template~\eqref{eq: prob1}, with a deterministic objective function, $n$ linear constraints and one indicator function for enforcing $\sum_{i=1}^d x_i = 1$ constraint.

We implement SASC and SPP from~\cite{patrascu2017nonasymptotic}.
Since the structure of~\eqref{eq:portfolio} does not have any restricted strong convexity due to linear objective function, we are applying the general convex version of SPP, which suggests setting a smoothness parameter $\mu$ depending on the final accuracy we would like to get as also discussed in basis pursuit problem.
We run SPP with two different $\mu$ values $10^{-1}$ and $10^{-2}$.
We run SASC with the parameters $\alpha_0 = 1$, $\omega=1.2$, $m_0=2$ and Case 1 in Algorithm~\ref{alg: A1}.
We use NYSE ($d = 36, n = 5651$), DJIA ($d = 30, n = 507$), SP500 ($d = 25, n = 1276$) and TSE ($d = 88, n = 1258$) where $d$ corresponds to the number of stocks and $n$ corresponds to the number of days for which the data is collected and we set $\epsilon$ in~\eqref{eq:portfolio} to be $0.2$.
These datasets are also used in~\cite{borodin2004can}.

We compute the ground truth using \texttt{cvx}~\cite{grant2008cvx} and plotted the distance of the iterates of the algorithms to the solution $\| x - x^\star\|$.
We compile the results in Figure~\ref{fig:portfolio}.

\begin{figure*}[h!]
\centering
\includegraphics[width=0.23\columnwidth]{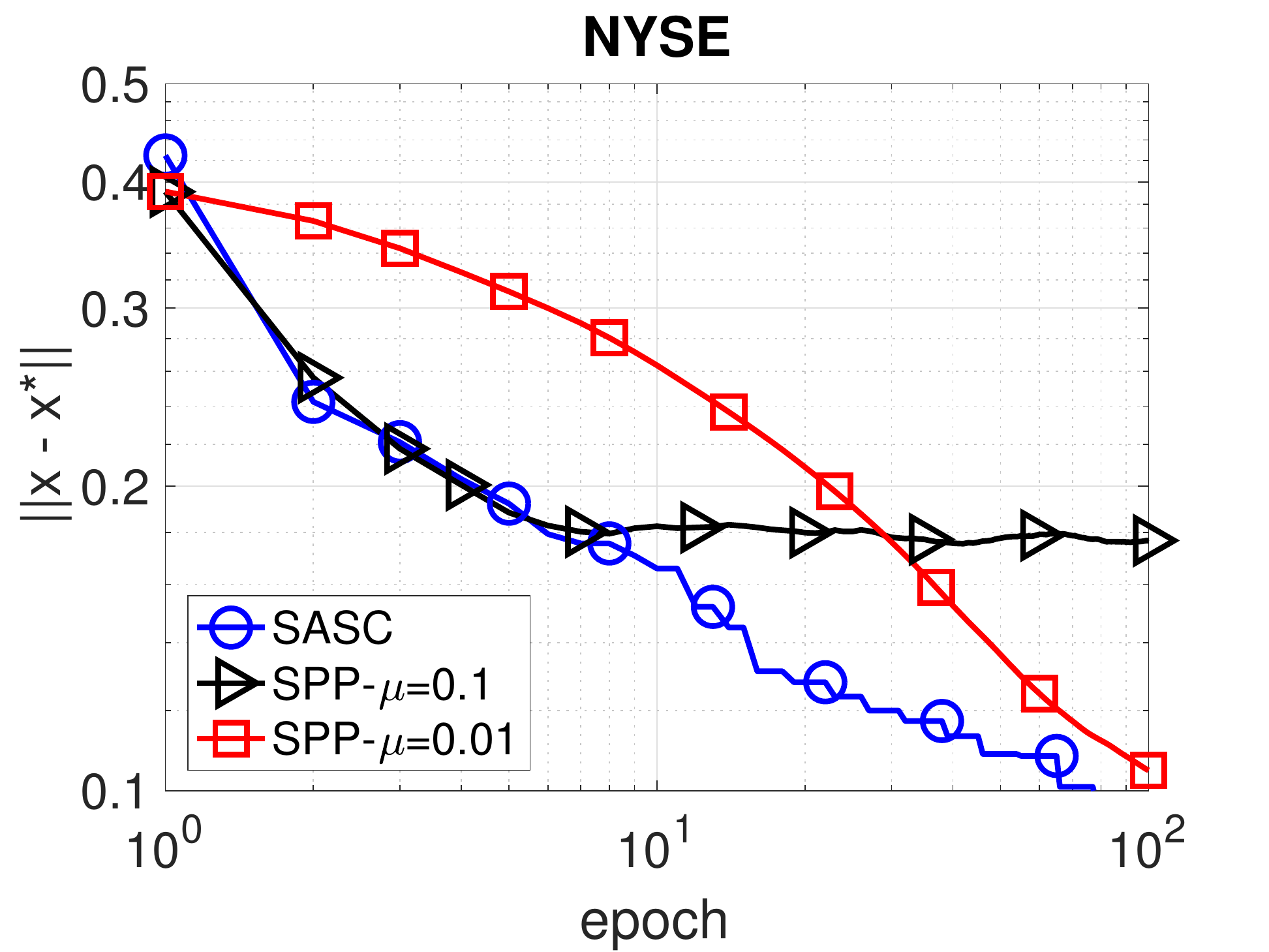}
\includegraphics[width=0.23\columnwidth]{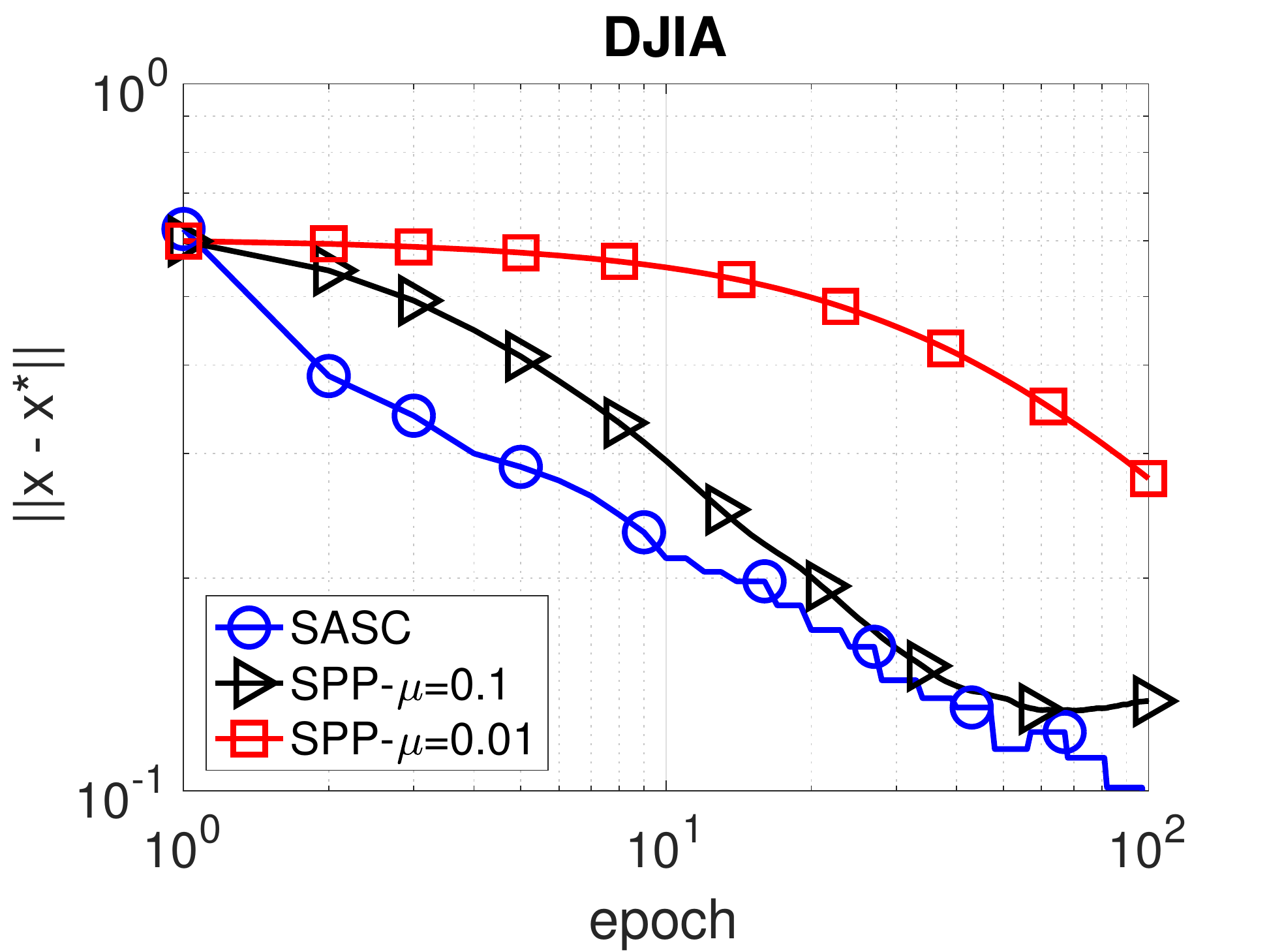}
\includegraphics[width=0.23\columnwidth]{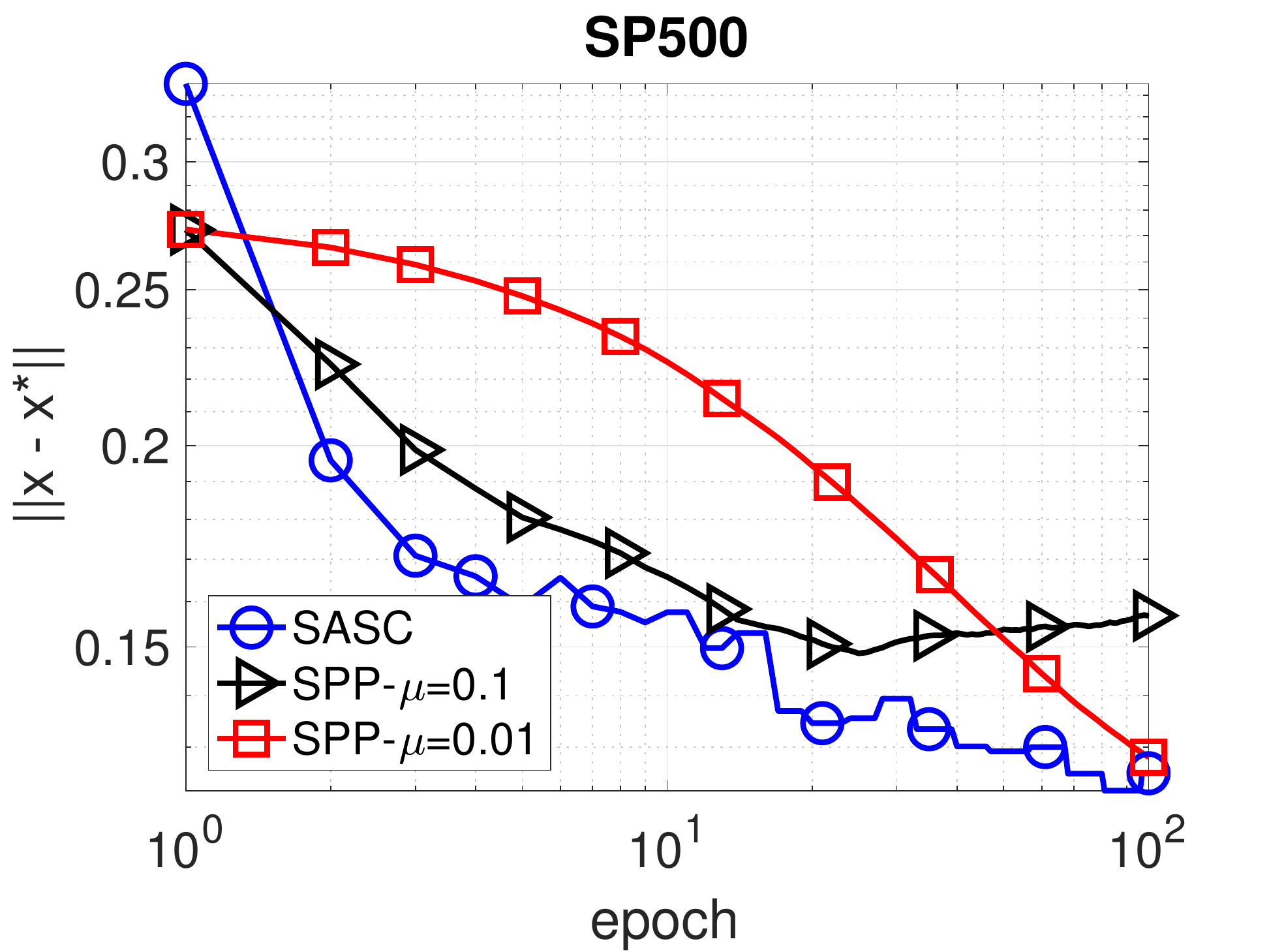}
\includegraphics[width=0.23\columnwidth]{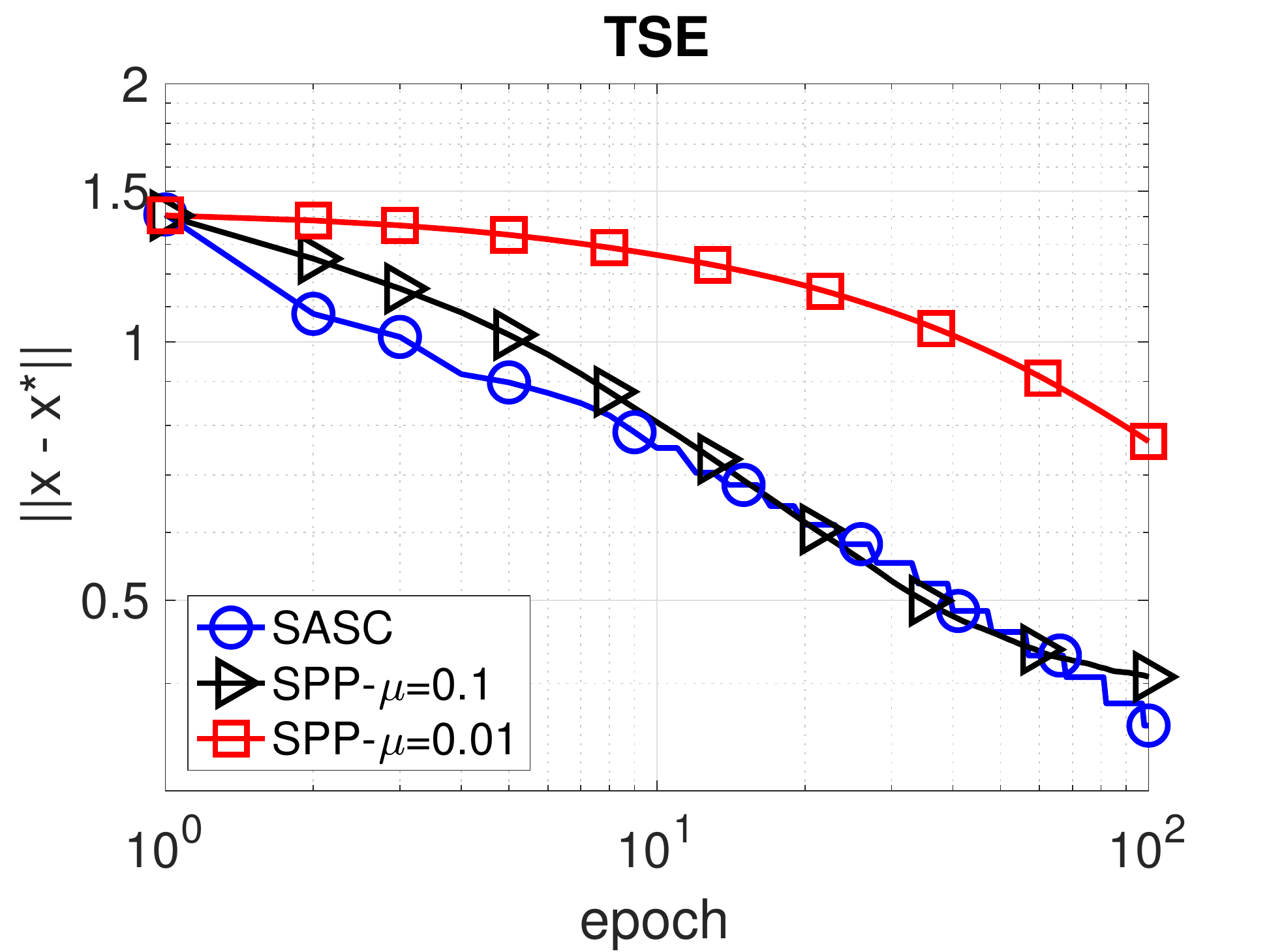}
\caption{Performance of SASC and SPP on portfolio optimization for four different datasets}
\label{fig:portfolio}
\end{figure*}

We can observe the behaviour of SPP from Figure~\ref{fig:portfolio} for different step size values $\mu$. 
Larger $\mu$ causes a fast decrease in the beginning, however, it also affects the accuracy that the algorithm is going to reach.
Therefore, large $\mu$ has the problem of stagnating at a low accuracy.
Smaller $\mu$ causes SPP to reach to higher accuracies at the expense of slower initial behaviour.
SASC has a steady behaviour and since it does not have a parameter depending on the final accuracy.
It removes the necessity of tuning $\mu$ in SPP, as we can observe the steady decrease of SASC throughout, beginning from the initial stage of the algorithm.

\subsection{Primal support vector machines without regularization parameter}
In this section, we consider the classical setting of binary classification, with a small twist.
For the standard setting, given a training set $\{ a_1, a_2, \dots, a_n \}$ and labels $\{ b_1, b_2, \dots, b_n \}$, where $a_i \in \mathbb{R}^p, \forall i$ and $b_i \in [-1, +1]$ the aim is to train a model that will classify the correct labels for the unseen examples.

Primal hard margin SVM problem is
\begin{equation}
\label{eq:svm_hm}
\min_{x\in\mathbb{R}^d} \frac{1}{2} \| x \|^2:
b_i \langle a_i, x \rangle \geq 1, \forall i.
\end{equation}
Since this problem does not have a solution unless the data is linearly separable, the standard way is to relax the constraints, and solve the soft margin SVM problem with hinge loss instead:
\begin{equation}
\label{eq:svm_uncons}
\min_{x\in\mathbb{R}^d} \frac{1}{2} \| x \|^2 + C \sum_{i=1}^n \max{\{ 0, 1-b_i \langle a_i, x\rangle\}},
\end{equation}
where $C$ has the role of a regularization parameter to be tuned.
The choice for $C$ has a drastic effect on the performance of the classifier as also been studied in the literature~\cite{hastie2004entire}.
It is known that poor choices of $C$ may lead to poor classification models.

We are going to have a radically different approach for the SVM problem.
Since the original formulation~\eqref{eq:svm_hm} fits to our template~\eqref{eq: prob1}, we can directly apply SASC to this formulation.
Even though the hard margin SVM problem does not necessarily have solution, applying SASC to~\eqref{eq:svm_hm} corresponds to solving a sequence of soft margin SVM problems with squared hinge loss, with changing regularization parameters.
The advantage of such an approach will be that there will be no necessity for a regularization parameter $C$ since this parameter will correspond to $\frac{1}{\beta}$ in our case where $\beta$ is the smoothness parameter, for which we have theoretical guideline from our analysis.

We compare SASC with Pegasos algorithm~\cite{shalev2011pegasos} which solves~\eqref{eq:svm_uncons} by applying stochastic subgradient algorithm.
Since the selection of the regularization parameter $C$ effects the performance of the model, we use 3 different values for the $\lambda$, namely $\{ \lambda_1, \lambda_2, \lambda_3 \} = \{ 10^{-3}/n, 1/n, 10^3/n \}$.
We use the following datasets from libsvm database~\cite{chang2011libsvm}: \texttt{kdd2010 raw version (bridge to algebra)} with $19,264,997$ training examples, $748,401$ testing examples and $1,163,024$ features, \texttt{rcv1.binary} with $20,242$ training examples, $677,399$ testing examples and $47,236$ features. For the last dataset, \texttt{news20.binary}, since there was not a dedicated testing dataset, we randomly split examples for training and testing with $17.996$ training examples, $2,000$ testing examples and $1,355,191$ features.
For SASC, we use $\alpha_0 = 1/2$, $\omega = 2$ in all experiments and use the parameter choices in Case 2 in Algorithm~\ref{alg: A1} due to strong convexity in the objective.
We computed the test errors for one pass over the data and compile the results in Figure~\ref{fig:svm}.

\begin{figure*}[h!]
\centering
\includegraphics[width=0.3\columnwidth]{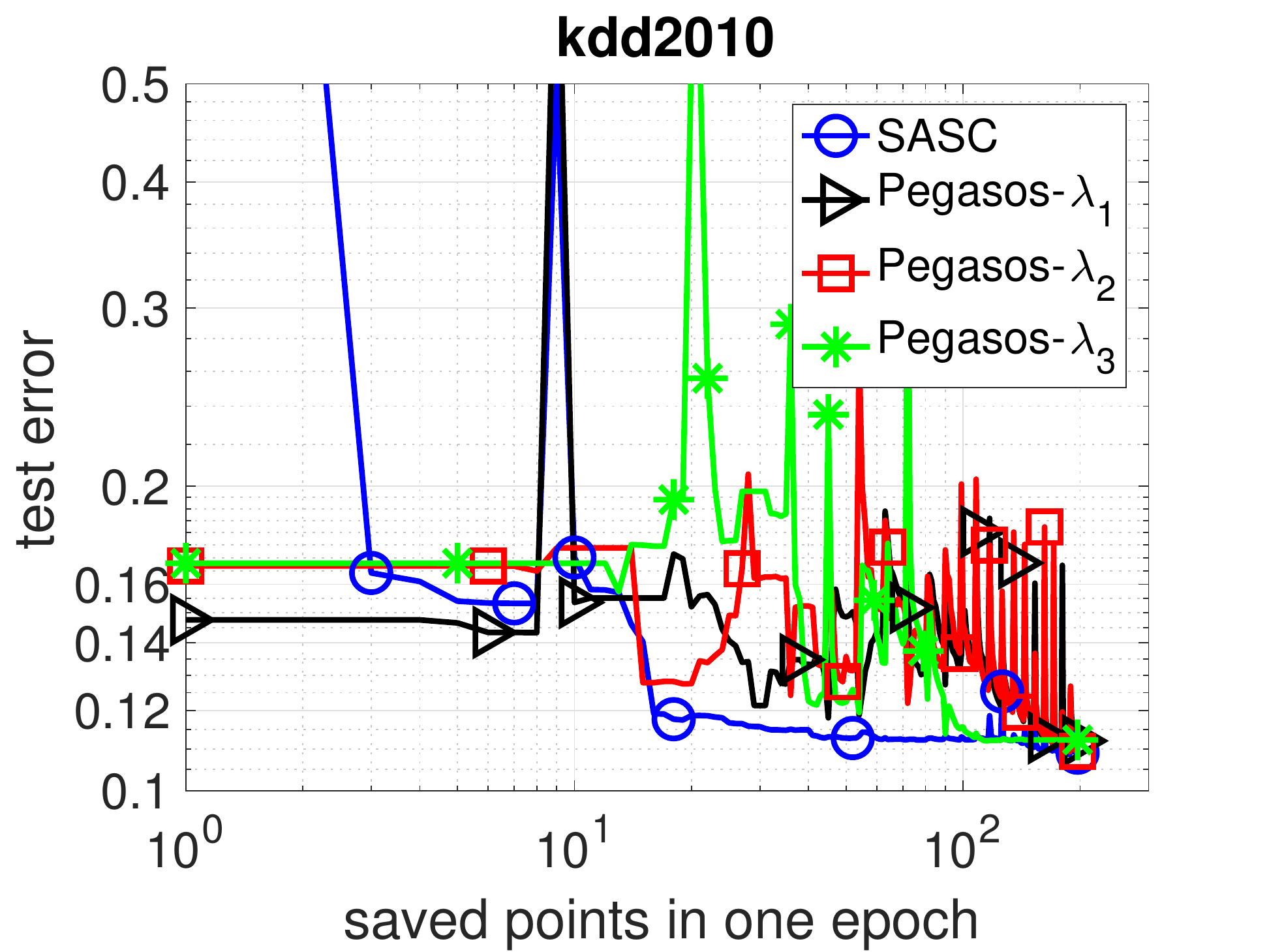}
\includegraphics[width=0.3\columnwidth]{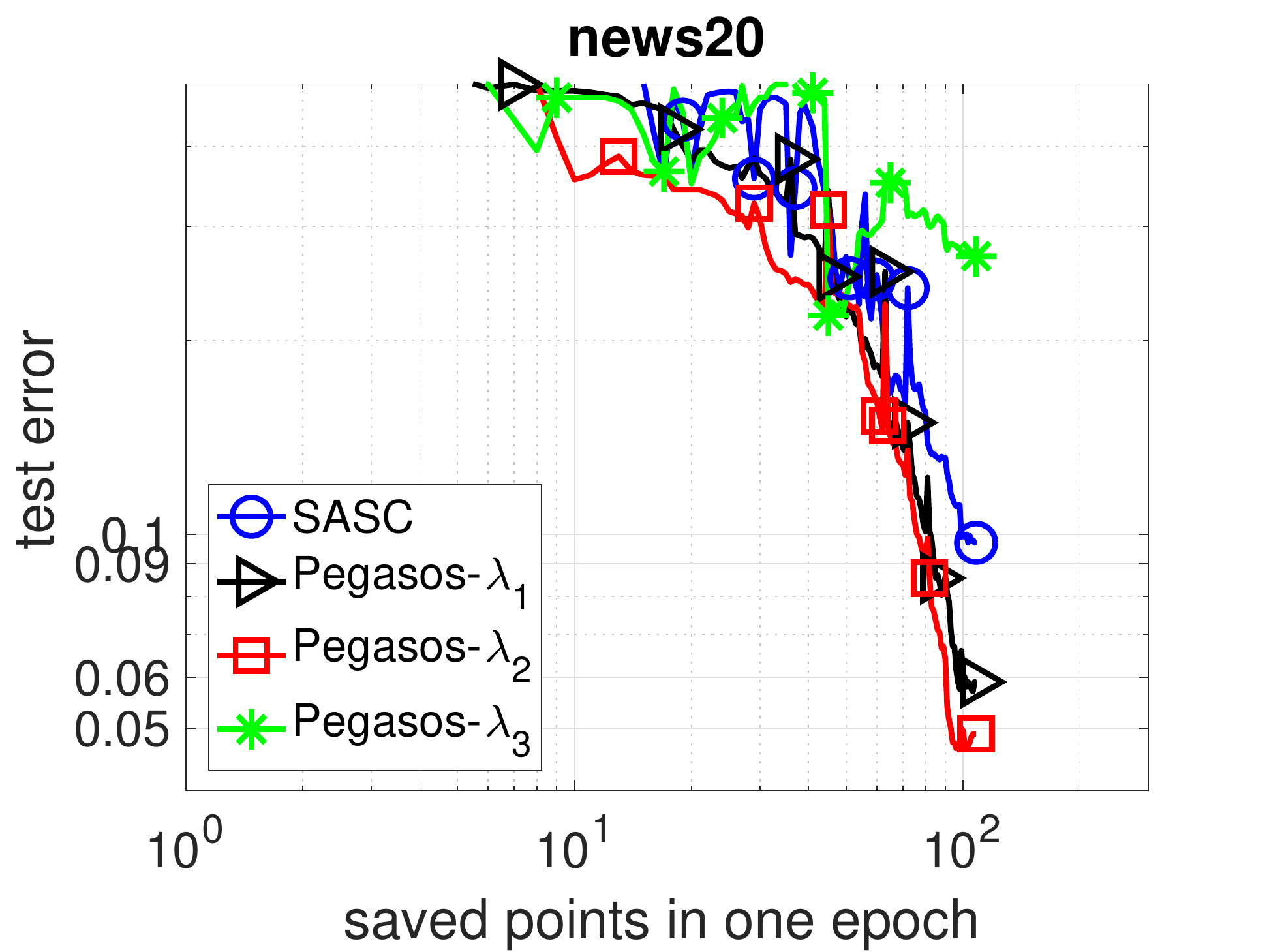}
\includegraphics[width=0.3\columnwidth]{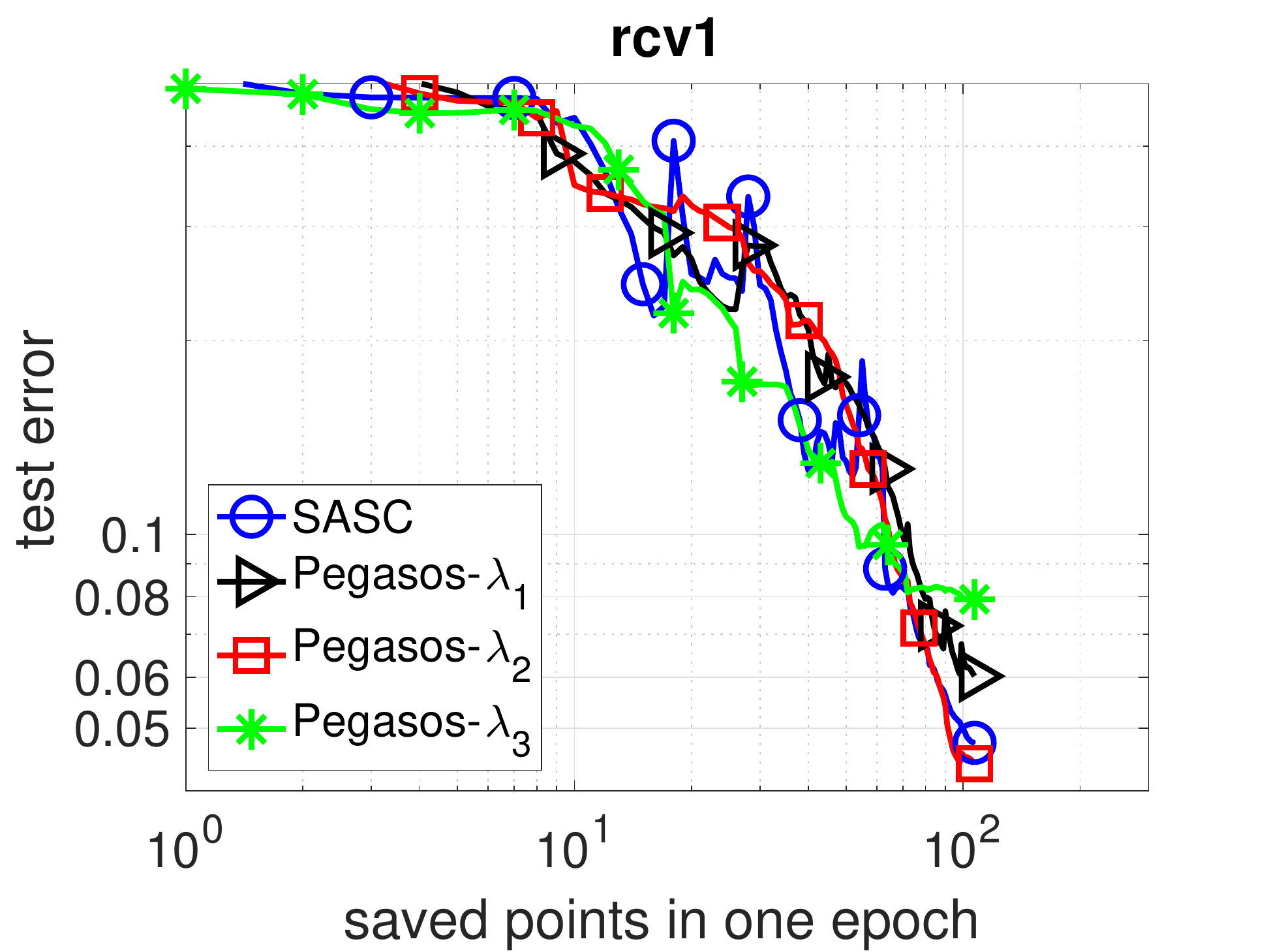}
\caption{Performance of SASC and Pegasos on SVM for three different datasets.}
\label{fig:svm}
\end{figure*}

We illustrate the performance of SASC and Pegasos in Figure~\ref{fig:svm}.
SASC seems to be comparable to Pegasos for different regularization parameters.
As can be seen in Figure~\ref{fig:svm}, Pegasos performs well for good selection of the regularization parameter.
However, when the parameter is selected incorrectly, it might stagnate at a high test error which can be observed in the plots.
On the other hand, SASC gets comparable, if not better, performance without the need to tune regularization parameter.

\bibliography{sgard-bibtex.bib}

\begin{thebibliography}{35}
\providecommand{\natexlab}[1]{#1}
\providecommand{\url}[1]{\texttt{#1}}
\expandafter\ifx\csname urlstyle\endcsname\relax
  \providecommand{\doi}[1]{doi: #1}\else
  \providecommand{\doi}{doi: \begingroup \urlstyle{rm}\Url}\fi

\bibitem[Abdelaziz et~al.(2007)Abdelaziz, Aouni, and
  El~Fayedh]{abdelaziz2007multi}
F.~B. Abdelaziz, B.~Aouni, and R.~El~Fayedh.
\newblock Multi-objective stochastic programming for portfolio selection.
\newblock \emph{European Journal of Operational Research}, 177\penalty0
  (3):\penalty0 1811--1823, 2007.

\bibitem[Agarwal et~al.(2009)Agarwal, Wainwright, Bartlett, and
  Ravikumar]{agarwal2009information}
A.~Agarwal, M.~J. Wainwright, P.~L. Bartlett, and P.~K. Ravikumar.
\newblock Information-theoretic lower bounds on the oracle complexity of convex
  optimization.
\newblock In \emph{Advances in Neural Information Processing Systems}, pages
  1--9, 2009.

\bibitem[Arora et~al.(2018)Arora, Khodak, Saunshi, and
  Vodrahalli]{arora2018compressed}
S.~Arora, M.~Khodak, N.~Saunshi, and K.~Vodrahalli.
\newblock A compressed sensing view of unsupervised text embeddings,
  bag-of-n-grams, and lstms.
\newblock In \emph{Proc. of the 6th International Conference on Learning
  Representations}, 2018.

\bibitem[Bauschke et~al.(2011)Bauschke, Combettes, et~al.]{bauschke2011convex}
H.~H. Bauschke, P.~L. Combettes, et~al.
\newblock \emph{Convex analysis and monotone operator theory in Hilbert
  spaces}, volume 408.
\newblock Springer, 2011.

\bibitem[Bianchi(2015)]{bianchi2015stochastic}
P.~Bianchi.
\newblock A stochastic proximal point algorithm: convergence and application to
  convex optimization.
\newblock In \emph{Computational Advances in Multi-Sensor Adaptive Processing
  (CAMSAP), 2015 IEEE 6th International Workshop on}, pages 1--4. IEEE, 2015.

\bibitem[Bianchi et~al.(2017)Bianchi, Hachem, and Salim]{bianchi2017constant}
P.~Bianchi, W.~Hachem, and A.~Salim.
\newblock A constant step forward-backward algorithm involving random maximal
  monotone operators.
\newblock \emph{arXiv preprint arXiv:1702.04144}, 2017.

\bibitem[Bolte et~al.(2017)Bolte, Nguyen, Peypouquet, and
  Suter]{bolte2017error}
J.~Bolte, T.~P. Nguyen, J.~Peypouquet, and B.~W. Suter.
\newblock From error bounds to the complexity of first-order descent methods
  for convex functions.
\newblock \emph{Mathematical Programming}, 165\penalty0 (2):\penalty0 471--507,
  2017.

\bibitem[Borodin et~al.(2004)Borodin, El-Yaniv, and Gogan]{borodin2004can}
A.~Borodin, R.~El-Yaniv, and V.~Gogan.
\newblock Can we learn to beat the best stock.
\newblock In \emph{Advances in Neural Information Processing Systems}, pages
  345--352, 2004.

\bibitem[Chang and Lin(2011)]{chang2011libsvm}
C.-C. Chang and C.-J. Lin.
\newblock {LIBSVM}: A library for support vector machines.
\newblock \emph{ACM Transactions on Intelligent Systems and Technology},
  2:\penalty0 27:1--27:27, 2011.
\newblock Software available at \url{http://www.csie.ntu.edu.tw/~cjlin/libsvm}.

\bibitem[Donoho(2006)]{donoho2006compressed}
D.~L. Donoho.
\newblock Compressed sensing.
\newblock \emph{IEEE Transactions on information theory}, 52\penalty0
  (4):\penalty0 1289--1306, 2006.

\bibitem[Garrigues and Ghaoui(2009)]{garrigues2009homotopy}
P.~Garrigues and L.~E. Ghaoui.
\newblock An homotopy algorithm for the lasso with online observations.
\newblock In \emph{Advances in neural information processing systems}, pages
  489--496, 2009.

\bibitem[Grant et~al.(2008)Grant, Boyd, and Ye]{grant2008cvx}
M.~Grant, S.~Boyd, and Y.~Ye.
\newblock Cvx: Matlab software for disciplined convex programming, 2008.

\bibitem[Hastie et~al.(2004)Hastie, Rosset, Tibshirani, and
  Zhu]{hastie2004entire}
T.~Hastie, S.~Rosset, R.~Tibshirani, and J.~Zhu.
\newblock The entire regularization path for the support vector machine.
\newblock \emph{Journal of Machine Learning Research}, 5\penalty0
  (Oct):\penalty0 1391--1415, 2004.

\bibitem[Lan and Monteiro(2013)]{lan2013iteration}
G.~Lan and R.~D. Monteiro.
\newblock Iteration-complexity of first-order penalty methods for convex
  programming.
\newblock \emph{Mathematical Programming}, 138\penalty0 (1-2):\penalty0
  115--139, 2013.

\bibitem[Mahdavi et~al.(2013)Mahdavi, Yang, and Jin]{mahdavi2013stochastic}
M.~Mahdavi, T.~Yang, and R.~Jin.
\newblock Stochastic convex optimization with multiple objectives.
\newblock In \emph{Advances in Neural Information Processing Systems}, pages
  1115--1123, 2013.

\bibitem[Moulines and Bach(2011)]{moulines2011non}
E.~Moulines and F.~R. Bach.
\newblock Non-asymptotic analysis of stochastic approximation algorithms for
  machine learning.
\newblock In \emph{Advances in Neural Information Processing Systems}, pages
  451--459, 2011.

\bibitem[Necoara et~al.(2018{\natexlab{a}})Necoara, Nesterov, and
  Glineur]{Necoara2017}
I.~Necoara, Y.~Nesterov, and F.~Glineur.
\newblock Linear convergence of first order methods for non-strongly convex
  optimization.
\newblock \emph{Mathematical Programming}, doi: 10.1007/s10107-018-1232-1,
  2018{\natexlab{a}}.

\bibitem[Necoara et~al.(2018{\natexlab{b}})Necoara, Richtarik, and
  Patrascu]{necoara2018randomized}
I.~Necoara, P.~Richtarik, and A.~Patrascu.
\newblock Randomized projection methods for convex feasibility problems:
  conditioning and convergence rates.
\newblock \emph{arXiv preprint arXiv:1801.04873}, 2018{\natexlab{b}}.

\bibitem[Nedi{\'c} et~al.(2018)Nedi{\'c}, Olshevsky, and
  Rabbat]{nedic2018network}
A.~Nedi{\'c}, A.~Olshevsky, and M.~G. Rabbat.
\newblock Network topology and communication-computation tradeoffs in
  decentralized optimization.
\newblock \emph{Proceedings of the IEEE}, 106\penalty0 (5):\penalty0 953--976,
  2018.

\bibitem[Nemirovski et~al.(2009)Nemirovski, Juditsky, Lan, and
  Shapiro]{nemirovski2009robust}
A.~Nemirovski, A.~Juditsky, G.~Lan, and A.~Shapiro.
\newblock Robust stochastic approximation approach to stochastic programming.
\newblock \emph{SIAM Journal on optimization}, 19\penalty0 (4):\penalty0
  1574--1609, 2009.

\bibitem[Nesterov(2005)]{nesterov2005smooth}
Y.~Nesterov.
\newblock Smooth minimization of non-smooth functions.
\newblock \emph{Mathematical programming}, 103\penalty0 (1):\penalty0 127--152,
  2005.

\bibitem[Ouyang and Gray(2012)]{ouyang2012stochastic}
H.~Ouyang and A.~Gray.
\newblock Stochastic smoothing for nonsmooth minimizations: Accelerating sgd by
  exploiting structure.
\newblock \emph{arXiv preprint arXiv:1205.4481}, 2012.

\bibitem[Patrascu and Necoara(2017)]{patrascu2017nonasymptotic}
A.~Patrascu and I.~Necoara.
\newblock Nonasymptotic convergence of stochastic proximal point methods for
  constrained convex optimization.
\newblock \emph{The Journal of Machine Learning Research}, 18\penalty0
  (1):\penalty0 7204--7245, 2017.

\bibitem[Polyak and Juditsky(1992)]{polyak1992acceleration}
B.~T. Polyak and A.~B. Juditsky.
\newblock Acceleration of stochastic approximation by averaging.
\newblock \emph{SIAM Journal on Control and Optimization}, 30\penalty0
  (4):\penalty0 838--855, 1992.

\bibitem[Rosasco et~al.(2014)Rosasco, Villa, and
  V{\~u}]{rosasco2014convergence}
L.~Rosasco, S.~Villa, and B.~C. V{\~u}.
\newblock Convergence of stochastic proximal gradient algorithm.
\newblock \emph{arXiv preprint arXiv:1403.5074}, 2014.

\bibitem[Salim(2018)]{salim2018random}
A.~Salim.
\newblock \emph{Random monotone operators and application to stochastic
  optimization}.
\newblock PhD thesis, Universit{\'e} Paris-Saclay, 2018.

\bibitem[Shalev-Shwartz et~al.(2011)Shalev-Shwartz, Singer, Srebro, and
  Cotter]{shalev2011pegasos}
S.~Shalev-Shwartz, Y.~Singer, N.~Srebro, and A.~Cotter.
\newblock Pegasos: Primal estimated sub-gradient solver for svm.
\newblock \emph{Mathematical programming}, 127\penalty0 (1):\penalty0 3--30,
  2011.

\bibitem[Sonnenburg et~al.(2006)Sonnenburg, R{\"a}tsch, Sch{\"a}fer, and
  Sch{\"o}lkopf]{sonnenburg2006large}
S.~Sonnenburg, G.~R{\"a}tsch, C.~Sch{\"a}fer, and B.~Sch{\"o}lkopf.
\newblock Large scale multiple kernel learning.
\newblock \emph{Journal of Machine Learning Research}, 7\penalty0
  (Jul):\penalty0 1531--1565, 2006.

\bibitem[Towfic and Sayed(2015)]{towfic2015stability}
Z.~J. Towfic and A.~H. Sayed.
\newblock Stability and performance limits of adaptive primal-dual networks.
\newblock \emph{IEEE Transactions on Signal Processing}, 63\penalty0
  (11):\penalty0 2888--2903, 2015.

\bibitem[Tran-Dinh et~al.(2018{\natexlab{a}})Tran-Dinh, Alacaoglu, Fercoq, and
  Cevher]{tran2018adaptive}
Q.~Tran-Dinh, A.~Alacaoglu, O.~Fercoq, and V.~Cevher.
\newblock An adaptive primal-dual framework for nonsmooth convex minimization.
\newblock \emph{arXiv preprint arXiv:1808.04648}, 2018{\natexlab{a}}.

\bibitem[Tran-Dinh et~al.(2018{\natexlab{b}})Tran-Dinh, Fercoq, and
  Cevher]{tran2018smooth}
Q.~Tran-Dinh, O.~Fercoq, and V.~Cevher.
\newblock A smooth primal-dual optimization framework for nonsmooth composite
  convex minimization.
\newblock \emph{SIAM Journal on Optimization}, 28\penalty0 (1):\penalty0
  96--134, 2018{\natexlab{b}}.

\bibitem[Tseng(2008)]{tseng2008accelerated}
P.~Tseng.
\newblock On accelerated proximal gradient methods for convex-concave
  optimization.
\newblock 2008.
\newblock submitted to SIAM J. Optim.

\bibitem[Van~Nguyen et~al.(2017)Van~Nguyen, Fercoq, and
  Cevher]{van2017smoothing}
Q.~Van~Nguyen, O.~Fercoq, and V.~Cevher.
\newblock Smoothing technique for nonsmooth composite minimization with linear
  operator.
\newblock \emph{arXiv preprint arXiv:1706.05837}, 2017.

\bibitem[Wang et~al.(2015)Wang, Chen, Liu, and Gu]{wang2015random}
M.~Wang, Y.~Chen, J.~Liu, and Y.~Gu.
\newblock Random multi-constraint projection: Stochastic gradient methods for
  convex optimization with many constraints.
\newblock \emph{arXiv preprint arXiv:1511.03760}, 2015.

\bibitem[Yu et~al.(2017)Yu, Neely, and Wei]{yu2017online}
H.~Yu, M.~Neely, and X.~Wei.
\newblock Online convex optimization with stochastic constraints.
\newblock In \emph{Advances in Neural Information Processing Systems}, pages
  1428--1438, 2017.

\end{thebibliography}
\bibliographystyle{abbrvnat}

\newpage 
\section*{Appendix}
We wish to solve the following problem:

\begin{align*}
\min _{x\in\mathbb{R}^d} & \{ P(x) := \mathbb{E}\left[ f(x, \xi) \right] + h(x)\} \\
& A(\xi) x \in b(\xi)  ~~ \xi \text{-almost surely}
\end{align*}

where $F(x) = \mathbb{E} \left[ f(x, \xi) \right]$ has $L(\nabla F)$ Lipschitz gradient and $h(x)$ is nonsmooth but proximable. Let us define the following quantities which we will use in the sequel:
\begin{equation}
P_{\beta}(x) = F(x) + G_\beta(Ax) + h(x) = \mathbb{E}_{\xi} \left[ f(x, \xi) + g_{\beta}(A(\xi)x, b(\xi) ) \right] + h(x),
\end{equation}
where $g_{\beta}(A(\xi)x, b(\xi)) =\frac{1}{2\beta} \dist{\left( A(\xi)x, b(\xi) \right)}^2$. We also define $S_{\beta}(x) = P_{\beta}(x) - P(x_\star)$ which we refer to as the smoothed gap function. We first prove a lemma to relate the decrease of smoothed gap function to the objective suboptimality and primal feasibility.

\begin{replemma}{lem: smooth_gap_lemma}
Let $(x_\star, y_\star)$ be a saddle point of

\begin{equation}
\min_x \max_y P(x) + \int \langle A(\xi)x, y(\xi) \rangle  - \supp_{b(\xi)}(y(\xi)) \mu(d\xi),
\end{equation}

and $S_{\beta}(x) = P_{\beta}(x)-P(x_\star) = P(x) - P(x_\star) + \frac{1}{2\beta} \int \dist{(A(\xi)x, b(\xi))^2 \mu(d\xi)}$.
Then, the following hold:

\begin{align}
&S_{\beta}(x) \geq -\frac{\beta}{2}\|y_\star\|^2 \nonumber \\
&P(x) - P(x_\star) \geq -\frac{1}{4\beta} \int \dist{(A(\xi)x, b(\xi))^2}\mu(d\xi) - \beta \| y_\star \|^2 \nonumber \\
&P(x) - P(x_\star) \leq S_{\beta}(x) \nonumber \\
&\int \dist{(A(\xi)x, b(\xi))^2}\mu(d\xi) \leq 4\beta^2\| y_\star\|^2 + 4\beta S_{\beta}(x) \nonumber
\end{align}

\end{replemma}

\begin{proof}
We recall that the optimal Lagrange multiplier  $y_\star = (y_\star(\xi))_{\xi}$ is a random variable of $\mathcal Y$. 
It is indeed of bounded variance thanks to the constraint qualification condition we assumed~\cite{bauschke2011convex}. We start with:
\begin{equation}\label{eq: lm1_bound1}
-\int \langle A(\xi)x, y_\star(\xi)\rangle  +  \supp_{b(\xi)}{(y_\star(\xi))} \mu(d\xi) \leq P(x) - P(x_\star) = S_{\beta}(x) - \frac{1}{2\beta}\int \dist{(A(\xi)x, b(\xi))^2} \mu(d\xi),
\end{equation}
where the inequality is due to saddle point definition, and the equality is due to the definition of $S_{\beta}$.

We continue by bounding the inner product $\langle A(\xi)x, y_\star(\xi)\rangle$.
Let $z:=A(\xi)x$, then
\begin{align}
\langle z, y_{\star}(\xi) \rangle &= \langle z - \Pi_{b(\xi)}(z), y_\star(\xi) \rangle + \langle \Pi_{b(\xi)}(z), y_\star(\xi) \rangle \leq \dist(z, b(\xi)) \| y_\star(\xi)\| + \langle A(\xi)x_\star, y_\star(\xi) \rangle \nonumber \\
&\leq \frac{1}{4\beta} \dist{(z, b(\xi))^2} + \beta \|y_\star(\xi)\|^2 + \supp_{b(\xi)}(y_\star(\xi)), \label{eq: lm1_inner}
\end{align}
where the first inequality follows from Cauchy-Schwarz inequality, the optimality conditions, and properties of Fenchel's transform: $A(\xi)x_\star \in \partial \supp_{b(\xi)}(y_\star(\xi)) \iff y_\star(\xi) \in \partial \delta_{b(\xi)}(A(\xi)x_\star) \iff \langle p - A(\xi)x_\star, y_\star(\xi) \rangle \leq 0 $, for all $p \in b(\xi)$. The second inequality follows from $2ab \leq a^2 + b^2$ and $-\supp_{b(\xi)}(y_\star(\xi))+\langle A(\xi)x_\star, y_\star(\xi) \rangle = \inf_{q\in b(\xi)} \langle A(\xi)x_\star - q, y_\star(\xi)\rangle \leq 0$, since $A(\xi)x_\star \in b(\xi)$.

We now use  $\int \|y_\star(\xi)\|^2 \mu(d\xi) = \|y_\star\|^2$,   integrate~\eqref{eq: lm1_inner} and plug in to~\eqref{eq: lm1_bound1} to obtain last inequality. Second and third inequalities directly follow from~\eqref{eq: lm1_bound1} and~\eqref{eq: lm1_inner}.

For the first inequality:
\begin{align*}
S_{\beta}(x) &= P(x) + \frac{1}{2\beta} \int \dist(A(\xi)x, b(\xi))^2\mu(d\xi)- P(x_\star)   \\
&= P(x) - P(x_\star) + \int \max_{y \in \mathbb R^d} \langle A(\xi)x, y \rangle - \supp_{b(\xi)}(y) - \frac{\beta}{2}\norm{y}^2 \mu(d\xi)\\
& \geq P(x) - P(x_\star) + \int \langle A(\xi)x, y_\star(\xi) \rangle - \supp_{b(\xi)}(y_\star(\xi)) - \frac{\beta}{2}\norm{y_\star(\xi)}^2\mu(d\xi)
\geq -\frac{\beta}{2} \|y_\star\|^2,
\end{align*}
where the second equality follows from the definition of smoothing and the inequality is due to~\eqref{eq: lm1_bound1}. 
\end{proof}

\subsection{General Convex Case}

\begin{lemma}\label{lem: nonsc_smoothed_gap_lemma}
Assume that for all $s$, $\frac{L(\nabla F)+\|A\|^2_{2, \infty}/\beta_s}{2\alpha_s} \leq 0$, $2\alpha_s \|A\|^2_{2, \infty} - \frac{\beta_s}{2} \leq 0$ and \\ $\mathbb{E}\left[ \| \nabla f(x, \xi) - \nabla F(x)\|^2  \right] \leq \sigma_f^2$. Let $ \bar{x}^s = \frac{1}{m_s} \sum_{k=1}^{m_s} x_k^s$. Then,

\begin{equation}
\mathbb{E} \left[ P_{\beta_S}(\bar{x}^S) - P_{\beta_S}(x_\star) \right] \leq \frac{1}{2\alpha_S m_S} \| x_\star - x_0 ^0\|^2 + \frac{\sum_{s=0}^{S-1}\beta_s \alpha_s m_s}{2\alpha_S m_S} \| y_\star \|^2 + 2 \frac{\sum_{s=0}^S \alpha_s^2 m_s}{\alpha_S m_S} \sigma_f^2. 
\end{equation}

\end{lemma}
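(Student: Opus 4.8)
The plan is to reduce the statement to a single stochastic proximal gradient (SPG) descent inequality for each inner loop and then chain these across outer loops. Write $\Phi_s = F + G_{\beta_s}\circ A$ for the smooth part of $P_{\beta_s}$, so that $P_{\beta_s}=\Phi_s+h$ is $L_s$-smooth with $L_s = L(\nabla F)+\|A\|_{2,\infty}^2/\beta_s$ (using $\|A\|_{2,2}\le\|A\|_{2,\infty}$). Since $x_\star$ is feasible, $G_{\beta_s}(Ax_\star)=0$ and $P_{\beta_s}(x_\star)=P(x_\star)$, so the quantity to be bounded is exactly $\mathbb E[S_{\beta_s}(\bar x^s)]$. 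The direction $D(x_k^s,\xi)$ is an unbiased estimate of $\nabla\Phi_s(x_k^s)$, with objective noise $e_k=\nabla f(x_k^s,\xi)-\nabla F(x_k^s)$ satisfying $\mathbb E[e_k\mid\mathcal F_k]=0$ and $\mathbb E\|e_k\|^2\le\sigma_f^2$, and a constraint-gradient part $A(\xi)^\top\nabla_z g_{\beta_s}(A(\xi)x_k^s,\xi)$ whose size I will control through the identity $\|\nabla_z g_{\beta_s}\|^2=\tfrac{2}{\beta_s}g_{\beta_s}$.

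First I would derive the one-step inequality. Starting from the three-point property of $\prox{\alpha_s h}$ at $x_{k+1}^s=\prox{\alpha_s h}(x_k^s-\alpha_s D(x_k^s,\xi))$, I insert $\langle D(x_k^s,\xi),x_{k+1}^s-x_\star\rangle$ and lower bound the deterministic gradient inner products using $L(\nabla F)$-smoothness and convexity of $F$ together with convexity and $\tfrac{\|A(\xi)\|^2}{\beta_s}$-smoothness of the realized map $x\mapsto g_{\beta_s}(A(\xi)x,\xi)$. The noise inner product is split as $\langle e_k,x_{k+1}^s-x_\star\rangle=\langle e_k,x_k^s-x_\star\rangle+\langle e_k,x_{k+1}^s-x_k^s\rangle$, where conditional expectation annihilates the first, zero-mean term and the second is treated by Young's inequality. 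The two stated hypotheses are precisely what force the coefficient of the residual $\|x_{k+1}^s-x_k^s\|^2$ to be nonpositive: $\alpha_s L_s\le 1$ (the first assumption, which under $\alpha_0\le\tfrac{3}{4L(\nabla f)}$ and $\beta_s=4\alpha_s\|A\|_{2,\infty}^2$ gives $\alpha_s L_s=\alpha_s L(\nabla F)+\tfrac14\le1$) cancels the objective-gradient penalty, while $2\alpha_s\|A\|_{2,\infty}^2\le\beta_s/2$ absorbs the constraint-smoothness penalty. After taking expectations this should leave the clean inequality
\[
2\alpha_s\,\mathbb E[S_{\beta_s}(x_{k+1}^s)]\le\mathbb E\big[\|x_k^s-x_\star\|^2-\|x_{k+1}^s-x_\star\|^2\big]+4\alpha_s^2\sigma_f^2 .
\]

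Next I would telescope. Summing over $k=0,\dots,m_s-1$ collapses the distance terms inside loop $s$, and Jensen's inequality applied to the convex function $S_{\beta_s}$ at $\bar x^s$ turns the left side into $2\alpha_s m_s\,\mathbb E[S_{\beta_s}(\bar x^s)]$, yielding $2\alpha_s m_s\,\mathbb E[S_{\beta_s}(\bar x^s)]\le\mathbb E[\|x_0^s-x_\star\|^2-\|x_{m_s}^s-x_\star\|^2]+4\alpha_s^2 m_s\sigma_f^2$. Because Case~1 of the algorithm sets $x_0^{s+1}=x_{m_s}^s$, summing over $s=0,\dots,S$ telescopes the distance terms across outer loops down to $\|x_0^0-x_\star\|^2$ (the final $-\|x_0^{S+1}-x_\star\|^2$ is dropped) and accumulates $\sum_{s=0}^{S}4\alpha_s^2 m_s\sigma_f^2$. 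On the left I keep only the $s=S$ term, and for each $s<S$ I invoke the first inequality of Lemma~\ref{lem: smooth_gap_lemma}, namely $S_{\beta_s}(\bar x^s)\ge-\tfrac{\beta_s}{2}\|y_\star\|^2$, moving $\sum_{s=0}^{S-1}\alpha_s m_s\beta_s\|y_\star\|^2$ to the right. Dividing by $2\alpha_S m_S$ produces exactly the three advertised terms, with $\tfrac{\sum_{s=0}^{S}4\alpha_s^2 m_s}{2\alpha_S m_S}=\tfrac{2\sum_{s=0}^{S}\alpha_s^2 m_s}{\alpha_S m_S}$.

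I expect the one-step inequality to be the crux. The constraint stochastic gradient has second moment $\mathbb E\|A(\xi)^\top\nabla_z g_{\beta_s}\|^2\le\tfrac{2\|A\|_{2,\infty}^2}{\beta_s}\,\mathbb E[G_{\beta_s}(Ax_k^s)]$, which degrades like $1/\beta_s$ as the homotopy drives $\beta_s\to0$; moreover $x_{k+1}^s$ is correlated with the very sample $\xi$ defining the direction, so one cannot naively identify $\mathbb E[g_{\beta_s}(A(\xi)x_{k+1}^s,\xi)]$ with $G_{\beta_s}(Ax_{k+1}^s)$. The delicate part is to route this contribution entirely into the nonpositive $\|x_{k+1}^s-x_k^s\|^2$ slack rather than into a surviving variance or residual-gap term — which is why the constants must come out exactly as stated — and this is possible only under the scaling $\beta_s=4\alpha_s\|A\|_{2,\infty}^2$ that the two hypotheses encode. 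Carefully tracking these constants through the Young step so that no constraint-variance term remains is where essentially all the work lies; the remaining telescoping and the appeal to Lemma~\ref{lem: smooth_gap_lemma} are routine.
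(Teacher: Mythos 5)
Your macro-structure coincides with the paper's proof exactly: the same one-step inequality, inner-loop telescoping plus Jensen, then the Case~1 restart $x_0^{s+1}=x_{m_s}^s$ combined with the lower bound $S_{\beta_s}(\bar x^s)\ge-\frac{\beta_s}{2}\|y_\star\|^2$ from Lemma~\ref{lem: smooth_gap_lemma} to telescope across stages (the paper phrases this as the recursion \eqref{eq: iter_rec1}--\eqref{eq: iter_rec2}, but the algebra is identical), and your identity $\|\nabla_z g_{\beta_s}(z,\xi)\|^2=\frac{2}{\beta_s}g_{\beta_s}(z,\xi)$ together with convexity reproduces the paper's inequality \eqref{eq: g_beta_lips}, including the crucial surplus $-\frac{\beta_s}{2}\|\nabla_z g_{\beta_s}(A(\xi)x_k^s,\xi)\|^2$. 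The gap is in the one-step inequality, which you yourself call the crux and do not actually establish; the two devices you sketch for it would fail under the stated parameters. First, your Young step on $\langle e_k, x_{k+1}^s-x_k^s\rangle$ costs $\frac{t}{2}\|x_{k+1}^s-x_k^s\|^2$ for some $t>0$, but there is no quadratic slack to pay it: the three-point inequality supplies exactly $-\frac{1}{2\alpha_s}\|x_{k+1}^s-x_k^s\|^2$, and with $\beta_s=4\alpha_s\|A\|^2_{2,\infty}$ the smoothness penalty has $\alpha_s L_s=\alpha_s L(\nabla F)+\frac14$, which equals $1$ at the boundary $\alpha_0 L(\nabla F)=\frac34$ of the allowed range (the hypothesis $\frac{L(\nabla F)+\|A\|^2_{2,\infty}/\beta_s}{2\alpha_s}\le 0$ is a typo for $\frac{L(\nabla F)+\|A\|^2_{2,\infty}/\beta_s}{2}\le\frac{1}{2\alpha_s}$, which is how you implicitly read it). So at the boundary any $t>0$ breaks the sign, and away from it the noise coefficient $\frac{1}{2t}$ inflates beyond the advertised $4\alpha_s^2\sigma_f^2$. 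Second, your plan to route the constraint-gradient contribution ``entirely into the nonpositive $\|x_{k+1}^s-x_k^s\|^2$ slack'' targets the wrong slack: that slack is fully consumed by the smoothness penalty, and in the paper the constraint second moment $2\alpha_s\|A\|^2_{2,\infty}\mathbb{E}_k\|\nabla_z g_{\beta_s}\|^2$ is instead absorbed by the cocoercivity surplus above, which is precisely where the second hypothesis $2\alpha_s\|A\|^2_{2,\infty}-\frac{\beta_s}{2}\le 0$ enters.

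The missing idea is the auxiliary deterministic point $T_{\alpha_s g}(x_k^s)=\prox{\alpha_s h}\bigl(x_k^s-\alpha_s(\nabla F(x_k^s)+A^\top\nabla_z G_{\beta_s}(Ax_k^s))\bigr)$ used in \eqref{eq: error_terms}. Decomposing $x_{k+1}^s-x_k^s=(x_{k+1}^s-T_{\alpha_s g}(x_k^s))+(T_{\alpha_s g}(x_k^s)-x_k^s)$, the inner product of the \emph{full} noise (objective plus constraint parts) with the second, $\mathcal F_k$-measurable piece vanishes in conditional expectation, while nonexpansiveness of the prox gives $\|x_{k+1}^s-T_{\alpha_s g}(x_k^s)\|\le\alpha_s\|\text{noise}\|$, so Cauchy--Schwarz bounds the entire noise term by $\alpha_s\mathbb{E}_k\|\text{noise}\|^2\le 2\alpha_s\sigma_f^2+2\alpha_s\|A\|^2_{2,\infty}\mathbb{E}_k\|\nabla_z g_{\beta_s}\|^2$ with \emph{no} displacement penalty; this leaves the full $-\frac{1}{2\alpha_s}$ to absorb $\frac{L_s}{2}$ under $\alpha_s L_s\le 1$, and it also resolves the correlation between $x_{k+1}^s$ and $\xi$ that you correctly flagged, since all function evaluations occur at the $\mathcal F_k$-measurable point $x_k^s$. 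With this substitution your outline closes and matches the paper; without it (or a strictly stronger step-size condition such as $\alpha_s L_s\le\frac12$, with degraded constants) the one-step inequality you assert does not follow from the stated hypotheses.
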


\begin{proof}
Let us define $z = Ax \in \mathcal Y$. 
We start by using Lipschitz gradient property of the function $f(x) + G_{\beta_s}(Ax)$
 
\begin{align}
P_{\beta_s}(x_{k+1}^s) &\leq F(x_k^s) + h(x_{k+1}^s) + G_{\beta_s}(Ax_k^s) + \langle \nabla F(x_k^s) + A^\top \nabla_z G_{\beta_s}(Ax_k^s), x^s_{k+1} - x^s_{k} \rangle \nonumber \\
&+ \frac{L(\nabla F + \nabla_x G_{\beta_s})}{2} \| x^s_{k+1} - x^s_k \|^2 \nonumber \\
&\leq F(x_k^s) + h(x_{k+1}^s) + G_{\beta_s}(x_k^s) + \langle \nabla f(x_k^s, \xi) + A(\xi)^\top \nabla _z g_{\beta_s}(A(\xi)x_k^s, \xi), x^s_{k+1} - x^s_{k} \rangle \nonumber\\ 
&+\langle \nabla F(x_k^s) - \nabla f(x_k^s, \xi) + A^\top \nabla_z G_{\beta_s}(Ax_k^s) - A(\xi)^\top \nabla _z g_{\beta_s}(A(\xi)x_k^s, \xi), x_{k+1}^s - x_k^s \rangle \nonumber \\
&+ \frac{L(\nabla F + \nabla_x G_{\beta_s})}{2} \| x^s_{k+1} - x^s_k \|^2. \label{eq: big_term1}
\end{align}

We will bound the linear terms in~\eqref{eq: big_term1} separately.

First, we use the three-point inequality (Property 1 from \cite{tseng2008accelerated}) with $x=x_\star$ to obtain,
\begin{align} \label{eq: lin_term1}
h(x_{k+1}^s) + &\langle \nabla f(x_k^s, \xi) + A(\xi)^\top \nabla _z g_{\beta_s}(A(\xi)x_k^s, \xi), x^s_{k+1} - x^s_{k} \rangle 
\leq h(x_\star) - \frac{1}{2\alpha_s} \| x_{k+1}^s - x_k^s \|^2 \\ &+ \langle \nabla f(x_k^s, \xi) + A(\xi)^\top \nabla _z g_{\beta_s}(A(\xi)x_k^s, \xi), x_\star - x^s_{k} \rangle + \frac{1}{2\alpha_s} \| x_\star - x_k^s \|^2 - \frac{1}{2\alpha_s} \| x_\star - x_{k+1}^s \|^2 \nonumber
\end{align}

Further, by the fact that $g_{\beta_s}(\cdot, \xi)$ has $1/\beta_s$-Lipschitz gradient, 
\begin{align}\label{eq: g_beta_lips}
\langle A(\xi)^\top \nabla_z g_{\beta_s}(A(\xi)x_k^s, \xi), x_\star - x_k^s \rangle &\leq g_{\beta_s}(A(\xi)x_\star, \xi) - g_{\beta_s}(A(\xi)x_k^s, \xi) \notag\\ &- \frac{\beta_s}{2} \| \nabla_z g_{\beta_s}(A(\xi)x_k^s, \xi) - \nabla_z g_{\beta_s}(A(\xi)x_\star, \xi) \|^2 \nonumber \\
&= g_{\beta_s}(A(\xi)x_\star, \xi) - g_{\beta_s}(A(\xi)x_k^s, \xi) - \frac{\beta_s}{2} \| \nabla_z g_{\beta_s}(A(\xi)x_k^s, \xi) \|^2,
\end{align}
where the equality follows from the fact that $\nabla_z g_{\beta_s}(A(\xi)x_\star, \xi) = 0$, due to the definition of $g_{\beta_s}(\cdot, \xi)$ and the fact that $A(\xi)x_\star \in b(\xi)$.

We now use the convexity, $ \langle \nabla f(x_k^s, \xi), x_\star - x_k^s \rangle \leq f(x_\star, \xi) - f(x_k^s, \xi) $ and~\eqref{eq: g_beta_lips} in~\eqref{eq: lin_term1} to get
\begin{align}
h(&x_{k+1}^s) + \langle \nabla f(x_k^s, \xi) + A(\xi)^\top \nabla _z g_{\beta_s}(A(\xi)x_k^s, \xi), x^s_{k+1} - x^s_{k} \rangle \leq h(x_\star) + f(x_\star, \xi) -f(x_k^s, \xi)  + g_{\beta_s}(A(\xi)x_\star, \xi) \nonumber \\ 
&- g_{\beta_s}(A(\xi)x_k^s, \xi) - \frac{\beta_s}{2} \| \nabla_z g_{\beta_s}(A(\xi)x_k^s, \xi) \|^2
+ \frac{1}{2\alpha_s} \| x_\star - x_k^s \|^2 - \frac{1}{2\alpha_s} \| x_\star - x_{k+1}^s \|^2 - \frac{1}{2\alpha_s} \| x_{k+1}^s - x_k^s \|^2 \label{eq: lin_term1_last}
\end{align}

We define
\begin{equation*}
T_{\alpha_s g}(x_k^s) = \text{prox}_{\alpha_s h} (x_k^s - \alpha_s (\nabla F(x_k^s) + A^\top \nabla _z G_{\beta_s}(Ax_k^s) )).
\end{equation*}
For the second linear term in~\eqref{eq: big_term1}, we apply conditional expectation knowing $x_k^s$, with respect to the choice of $\xi=\xi_{k+1}$, 
\begin{align}
&\mathbb{E}_k \left[ \langle \nabla F(x_k^s) - \nabla f(x_k^s, \xi) + A^\top \nabla_z G_{\beta_s}(Ax_k^s) - A(\xi)^\top \nabla _z g_{\beta_s}(A(\xi)x_k^s, \xi), x_{k+1}^s - x_k^s \rangle \right] = \nonumber \\
& \mathbb{E}_k \Big[ \langle \nabla F(x_k^s) - \nabla f(x_k^s, \xi) + A^\top \nabla_z G_{\beta_s}(Ax_k^s) - A(\xi)^\top \nabla _z g_{\beta_s}(A(\xi)x_k^s, \xi), x_{k+1}^s - T_{\alpha_s g}(x_k^s) \rangle \nonumber \\
&+ \langle \nabla F(x_k^s) - \nabla f(x_k^s, \xi) + A^\top \nabla_z G_{\beta_s}(Ax_k^s) - A(\xi)^\top \nabla _z g_{\beta_s}(A(\xi)x_k^s, \xi), T_{\alpha_s g}(x_k^s) - x_k^s \rangle \Big] \nonumber \\
&= \mathbb{E}_k \left[ \langle \nabla F(x_k^s) - \nabla f(x_k^s, \xi) + A^\top \nabla_z G_{\beta_s}(Ax_k^s) - A(\xi)^\top \nabla _z g_{\beta_s}(A(\xi)x_k^s, \xi), x_{k+1}^s - T_{\alpha_s g}(x_k^s) \rangle \right] \nonumber \\
&\leq \mathbb{E}_k \left[ \| \nabla F(x_k^s) - \nabla f(x_k^s, \xi) + A^\top \nabla_z G_{\beta_s}(Ax_k^s) - A(\xi)^\top \nabla _z g_{\beta_s}(A(\xi)x_k^s, \xi) \| \|x_{k+1}^s - T_{\alpha_s g}(x_k^s) \| \right] \nonumber \\
&\leq \alpha_s\mathbb{E}_k \left[ \| \nabla F(x_k^s) - \nabla f(x_k^s, \xi) + A^\top \nabla_z G_{\beta_s}(Ax_k^s) - A(\xi)^\top \nabla _z g_{\beta_s}(A(\xi)x_k^s, \xi) \|^2 \right] \nonumber \\
&\leq 2\alpha_s\mathbb{E}_k \left[ \| \nabla F(x_k^s) - \nabla f(x_k^s, \xi) \|^2 \right] + 2\alpha_s \mathbb{E}_k \left[ \| A^\top \nabla_z G_{\beta_s}(Ax_k^s) - A(\xi)^\top \nabla _z g_{\beta_s}(A(\xi)x_k^s, \xi) \|^2 \right] \nonumber \\
&\leq 2\alpha_s\sigma_f^2 + 2\alpha_s \mathbb{E}_k \left[ \| A(\xi)^\top \nabla _z g_{\beta_s}(A(\xi)x_k^s, \xi) \|^2 \right] \nonumber \\
&\leq 2\alpha_s \sigma_f^2 + 2\alpha_s \sup_{\xi} \|A(\xi)\|^2 \mathbb{E}_k \left[ \| \nabla _z g_{\beta_s}(A(\xi)x_k^s, \xi) \|^2 \right], \label{eq: error_terms}
\end{align}
where the second inequality is due to the definition of $x_{k+1}^s$, $T_{\alpha_s g}(x_k^s)$ and nonexpansiveness of proximal operator. Fourth inequality is due to the fact that $\mathbb{E}\left[ \| X - \mathbb{E}\left[ X \right] \|^2 \right] = \mathbb{E}\left[ \| X \|^2 \right] -  \left( \mathbb{E} \left[ X \right] \right)^2$, for any random variable X and $\mathbb{E}_k \left[ A(\xi)^\top \nabla _z g_{\beta_s}(A(\xi)x_k^s, \xi) \right] =   A^\top \nabla_z G_{\beta_s}(Ax_k^s)$.

We take conditional expectation of~\eqref{eq: big_term1}, knowing $x_k^s$, and plug in~\eqref{eq: lin_term1_last},~\eqref{eq: error_terms} to obtain
\begin{align*}
\mathbb{E}_k \left[ P_{\beta_s}(x_{k+1}^s)\right] &\leq \mathbb{E}_k \bigg[ F(x_k^s) + h(x_{k+1}^s) + G_{\beta_s}(x_k^s) + \langle \nabla f(x_k^s, \xi) + A(\xi)^\top \nabla _z g_{\beta_s}(A(\xi)x_k^s, \xi), x^s_{k+1} - x^s_{k} \rangle \nonumber\\ 
&+\langle \nabla F(x_k^s) - \nabla f(x_k^s, \xi) + A^\top \nabla_z G_{\beta_s}(Ax_k^s) - A(\xi)^\top \nabla _z g_{\beta_s}(A(\xi)x_k^s, \xi), x_{k+1}^s - x_k^s \rangle \nonumber \\ 
&+ \frac{L(\nabla F + \nabla_x G_{\beta_s})}{2} \| x^s_{k+1} - x^s_k \|^2 \bigg] \nonumber\\
& \leq  P_{\beta_s}(x_\star) + \frac{1}{2\alpha_s} \| x_\star - x_k^s \|^2 - \frac{1}{2\alpha_s} \mathbb{E}_k \left[ \|x_\star - x_{k+1}^s \|^2 \right] \\ &+ \left( 2\alpha_s \|A\|^2_{2, \infty} - \frac{\beta_s}{2} \right) \mathbb{E}_k \left[ \nabla_z g_{\beta_s}(A(\xi)x_k^s, \xi)  \right] \nonumber \\
&+ \left( \frac{L(\nabla F) + \|A\|^2_{2, \infty}/{\beta_s}}{2} - \frac{1}{2\alpha_s} \right) \mathbb{E}_k \left[ \| x_{k+1}^s - x_k^s \|^2 \right] + 2\alpha_s \mathbb{E}_k \left[ \| \nabla f(x_k^s, \xi) \|^2 \right].
\end{align*}
We use the assumptions that $2\alpha_s \|A\|^2_{2, \infty} - \frac{\beta_s}{2} \leq 0$ and $\frac{L(\nabla F) + \|A\|^2_{2, \infty}/{\beta_s}}{2} - \frac{1}{2\alpha_s} \leq 0$ to get
\begin{align*}
\mathbb{E}_k \left[ P_{\beta_s}(x_{k+1}^s)\right] &\leq  P_{\beta_s}(x_\star) + \frac{1}{2\alpha_s} \| x_\star - x_k^s \|^2 - \frac{1}{2\alpha_s} \mathbb{E}_k \left[ \|x_\star - x_{k+1}^s \|^2 \right] + 2\alpha_s \sigma_f^2.
\end{align*}

We apply total expectation with respect to the history $\mathcal{F}_{k}=\{ \xi_0, \dots, \xi_k \}$ and sum for $k\in\{0, 1, \dots, m_s-1\}$ to obtain

\begin{align}
\mathbb{E} \left[ P_{\beta_s} \left(\frac{1}{m_s} \sum_{k=1}^{m_s} x_{k}^s \right) - P_{\beta_s}(x_\star) \right] &\leq \frac{1}{2\alpha_s m_s} \mathbb{E} \left[ \| x_\star - x_0^s \|^2 \right] - \frac{1}{2\alpha_s m_s} \mathbb{E} \left[ \|x_\star - x_{m_s}^s \|^2 \right] \nonumber \\
&+ \frac{2\alpha_s}{m_s} \sum_{k=0}^{m_s-1} \sigma_f^2\nonumber\\
&\leq \frac{1}{2\alpha_s m_s} \mathbb{E} \left[ \| x_\star - x_0^s \|^2 \right] - \frac{1}{2\alpha_s m_s} \mathbb{E} \left[ \|x_\star - x_{m_s}^s \|^2 \right] + 2\alpha_s \sigma_f^2. \label{eq: inner_loop_rec1}
\end{align}

By Lemma~\ref{lem: smooth_gap_lemma}, we know that for all $x$, we have (we use that $P_{\beta_s}(x_\star) = P(x_\star)$):
\begin{align}\label{eq: smoothed_obj_lowerbound}
P_{\beta_s}(x) - P_{\beta_s}(x_\star) \geq -\frac{\beta_s}{2} \| y_\star\|^2.
\end{align}
By the restarting rule of the inner loop, one has $x_{m_s}^s = x_0^{s+1}$. Using~\eqref{eq: smoothed_obj_lowerbound} in~\eqref{eq: inner_loop_rec1}, we obtain
\begin{align} \label{eq: iter_rec1}
\mathbb{E} \left[ \| x_\star - x_0^{s+1} \|^2 \right] \leq \mathbb{E} \left[ \| x_\star - x_0^{s} \|^2 \right]  +\beta_s \alpha_s m_s \| y_\star \|^2 + 4\alpha_s^2 m_s \sigma_f^2
\end{align}
We now sum~\eqref{eq: iter_rec1} for $s\in\{ 0, 1, \dots, S-1 \}$
\begin{align} \label{eq: iter_rec2}
\mathbb{E} \left[ \| x_\star - x_0^{S} \|^2 \right] \leq \| x_\star - x_0^{0} \|^2  + \sum_{s=0}^{S-1} \beta_s \alpha_s m_s \| y_\star \|^2 + 4\sum_{s=0}^{S-1}\alpha_s^2 m_s \sigma_f^2
\end{align}
We now use~\eqref{eq: iter_rec2} in~\eqref{eq: inner_loop_rec1} to obtain
\begin{align*}
\mathbb{E} \left[ P_{\beta_S} \left(\bar{x}^S \right) - P_{\beta_S}(x_\star) \right] &\leq \frac{1}{2\alpha_S m_S} \| x_\star - x_0^0 \|^2 + \frac{\sum_{s=0}^{S-1}\beta_s \alpha_s m_s}{2\alpha_S m_S} \| y_\star \|^2 + 4 \frac{\sum_{s=0}^{S-1} \alpha_s^2 m_s}{2\alpha_S m_S} \sigma_f^2 + 2\alpha_S \sigma_f^2
\end{align*}
\end{proof}

In the following lemma, we estimate the rates of the parameters to determine the final convergence rates:

\begin{lemma}\label{lem: nonsc_param_lemma}
Denote as $M_S = \sum_{s=0}^{S}m_s$ the total number of iterations to compute $\bar{x}^S$. Let $\omega > 1$. Let us choose $\alpha_0 \leq \frac{3}{4L(\nabla f)}$, $m_0 \in \mathbb{N}_\ast$, $m_s = \lfloor m_0 \omega^S \rfloor$, $\alpha_s = \alpha_0 \omega^{-s/2}$ and $\beta_s = 4\alpha_s \|A\|^2_{2, \infty}$. Then, for all $s$, $\frac{L(\nabla F)+\|A\|^2_{2, \infty}/{\beta_s}}{2} - \frac{1}{2\alpha_s} \leq 0$ and $2\alpha_s \|A\|^2_{2, \infty} - \frac{\beta_s}{2} \leq 0$. Moreover,

\begin{align}
&\beta_s \leq 4\alpha_0 \sqrt{m_0}\|A\|^2_{2, \infty} \frac{\sqrt{\omega}}{\sqrt{\omega - 1}} \frac{1}{\sqrt{M_s}} \nonumber \\
&\alpha_s m_s \geq \alpha_0 \frac{(m_0-1)}{\sqrt{m_0}}\frac{\sqrt{\omega-1}}{\sqrt{\omega}}\sqrt{M_s} \nonumber \\
&\sum_{s=0}^{S-1}\beta_s \alpha_s m_s \leq 4\alpha_0^2 \|A\|^2_{2, \infty} m_0 \frac{\log(M_s/m_0)}{\log(\omega)} \nonumber \\
&\sum_{s=0}^{S} \alpha_s^2 m_s \leq \alpha_0 m_0 \left(\frac{\log(M_s/m_0)}{\log(\omega)}+1\right) \nonumber
\end{align}

\begin{proof}
By definition of $\beta_s$, $2\alpha_s \|A\|^2_{2, \infty} - \frac{\beta_s}{2} \leq 0$ holds with equality.
By using the definition of $\beta_s$, the fact that $\alpha_s$ is a decreasing sequence and the condition on $\alpha_0$, we have $\frac{L(\nabla F)+\|A\|^2_{2, \infty}/{\beta_s}}{2} - \frac{1}{2\alpha_s} \leq 0$.

We now compute the total number of iterations:
\begin{equation}
M_S = \sum_{s=0}^{S}m_s = \sum_{s=0}^{S} \lfloor m_0 \omega^s \rfloor \leq \sum_{s=0}^{S} m_0 \omega^s = m_0 \frac{\omega^{S+1}-1}{\omega - 1},
\end{equation}
which in turn gives
\begin{equation}
\omega^S \geq \frac{\omega - 1}{\omega}\frac{M_S}{m_0} + \frac{1}{\omega} \geq \frac{\omega-1}{\omega} \frac{M_S}{m_0}.
\end{equation}
We now use this bound to get
\begin{align*}
&\beta_S = 4\alpha_S\|A\|^2_{2, \infty} = 4\alpha_0\|A\|^2_{2, \infty} \omega^{-S/2} \leq 4\alpha_0 \|A\|^2_{2, \infty} \frac{\sqrt{\omega}}{\sqrt{\omega-1}}\frac{\sqrt{m_0}}{\sqrt{M_S}} \\
&\alpha_S m_S = \alpha_0 \omega^{-S/2} \lfloor m_0 \omega^S \rfloor \geq \alpha_0 m_0\omega^{S/2} - \alpha_0\omega^{-S/2} \geq \alpha_0 \frac{(m_0-1)}{\sqrt{m_0}} \frac{\sqrt{\omega-1}}{\sqrt{\omega}}\sqrt{M_S}.
\end{align*}
We can also lower bound $M_S$ as
\begin{equation*}
M_S = \sum_{s=0}^S m_s = \sum_{s=0}^S \lfloor m_0 \omega^s \rfloor = m_0 + \sum_{s=1}^S \lfloor m_0 \omega^s \rfloor \geq m_0 + m_0 \omega^S - 1\geq m_0\omega^S,
\end{equation*}
since $m_0 \geq 1$. We thus get
\begin{equation}\label{eq: s_bound}
S \leq \frac{\log{(M_S/m_0)}}{\log(\omega)}
\end{equation}
Further,
\begin{equation*}
\beta_s \alpha_s m_s = 4\alpha_0^2 \|A\|^2_{2, \infty} \omega^{-s}\lfloor m_0\omega^s \rfloor \leq 4\alpha_0^2 \|A\|^2_{2, \infty} m_0.
\end{equation*}
Now we use~\eqref{eq: s_bound} to show that
\begin{equation*}
\sum_{s=0}^{S-1} \beta_s\alpha_s m_s \leq S \times 4\alpha_0^2 \|A\|^2_{2, \infty}m_0\leq 4\alpha_0^s\|A\|^2_{2, \infty} m_0 \frac{\log(M_S/m_0)}{\log(\omega)}.
\end{equation*}

Lastly, we use the relation $\beta_s = 4\alpha_s\|A\|^2_{2, \infty}$ to conclude last bound.
\end{proof}

\end{lemma}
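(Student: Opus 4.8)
The plan is to separate the two pointwise step-size/smoothness conditions from the four asymptotic rate estimates. For the pointwise conditions I would simply substitute $\beta_s = 4\alpha_s\|A\|^2_{2,\infty}$: this makes $2\alpha_s\|A\|^2_{2,\infty} - \tfrac{\beta_s}{2} = 0$ immediately, and it turns $\|A\|^2_{2,\infty}/\beta_s$ into $\tfrac{1}{4\alpha_s}$, so that
\[
\frac{L(\nabla F)+\|A\|^2_{2,\infty}/\beta_s}{2} - \frac{1}{2\alpha_s} = \frac{L(\nabla F)}{2} - \frac{3}{8\alpha_s},
\]
which is nonpositive exactly when $\alpha_s \leq \tfrac{3}{4L(\nabla F)}$; since $\alpha_s = \alpha_0\omega^{-s/2}\leq\alpha_0$ and $\alpha_0$ was chosen below $\tfrac{3}{4L(\nabla f)}$, this holds for every $s$ (using $L(\nabla F)\leq L(\nabla f)$, as $F$ is an average of the $L(\nabla f)$-smooth functions $f(\cdot,\xi)$).

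The core of the argument is to convert quantities that are geometric in the loop index $s$ into powers of $M_S$. I would first sandwich the floor sum $M_S=\sum_{s=0}^S\lfloor m_0\omega^s\rfloor$: from $\lfloor m_0\omega^s\rfloor \leq m_0\omega^s$ the geometric series gives $M_S \leq m_0\tfrac{\omega^{S+1}-1}{\omega-1}$, which rearranges to the one-sided bound $\omega^S \geq \tfrac{\omega-1}{\omega}\tfrac{M_S}{m_0}$; conversely, from $\lfloor m_0\omega^s\rfloor \geq m_0\omega^s-1$ together with $m_0\geq 1$ one gets $M_S \geq m_0 + (m_0\omega^S-1) \geq m_0\omega^S$, hence $S \leq \log(M_S/m_0)/\log\omega$. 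These two inequalities are the only bridge I need between $s$-indexed and $M_S$-indexed expressions.

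With these in hand each of the four estimates is a short substitution. For $\beta_S = 4\alpha_0\|A\|^2_{2,\infty}\omega^{-S/2}$ I would apply the lower bound on $\omega^S$ to control $\omega^{-S/2}$, and for $\alpha_S m_S = \alpha_0\omega^{-S/2}\lfloor m_0\omega^S\rfloor$ I would first drop the floor via $\lfloor m_0\omega^S\rfloor \geq m_0\omega^S-1$, absorb the stray $-1$ using $\omega^{-S/2}\leq\omega^{S/2}$ to reach $\alpha_0(m_0-1)\omega^{S/2}$, and then insert the same $\omega^S$ bound. For the two sums the key observation is that the $\omega^{\pm s}$ factors cancel against the floor termwise: both $\beta_s\alpha_s m_s = 4\alpha_0^2\|A\|^2_{2,\infty}\omega^{-s}\lfloor m_0\omega^s\rfloor$ and $\alpha_s^2 m_s = \alpha_0^2\omega^{-s}\lfloor m_0\omega^s\rfloor$ are bounded by the $s$-independent constants $4\alpha_0^2\|A\|^2_{2,\infty}m_0$ and $\alpha_0^2 m_0$ respectively, so each sum is at most its number of terms ($S$ or $S+1$) times that constant, and the factor $S$ is then replaced by $\log(M_S/m_0)/\log\omega$.

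I expect the only delicate point to be the bookkeeping around the floor $\lfloor m_0\omega^s\rfloor$: the rate bounds want the clean geometric behavior $m_0\omega^s$, and one must check that the $-1$ losses from rounding are harmless. This is exactly where $m_0\geq 1$ (to guarantee $M_S\geq m_0\omega^S$ and thus $S\leq\log(M_S/m_0)/\log\omega$) and the elementary inequality $\omega^{-S/2}\leq\omega^{S/2}$ (to clean up the $\alpha_S m_S$ bound) are used; everything else is routine algebra.
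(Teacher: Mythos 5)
Your proposal is correct and follows essentially the same route as the paper's proof: the same floor-sandwich bounds $m_0\omega^S \leq M_S \leq m_0\frac{\omega^{S+1}-1}{\omega-1}$ as the bridge between the loop index $s$ and $M_S$, the same trick $\omega^{-S/2}\leq\omega^{S/2}$ to absorb the rounding loss in $\alpha_S m_S$, and the same termwise cancellation $\omega^{-s}\lfloor m_0\omega^s\rfloor\leq m_0$ for the two sums, with $S\leq\log(M_S/m_0)/\log(\omega)$ replacing the number of terms (you in fact spell out the first step-size condition, via $L(\nabla F)\leq L(\nabla f)$, more explicitly than the paper does). One small remark: your termwise bound gives $\sum_{s=0}^{S}\alpha_s^2 m_s\leq\alpha_0^2 m_0(S+1)$, which is the natural conclusion of the paper's own argument as well, so the factor $\alpha_0 m_0$ in the lemma's displayed statement appears to be a typo for $\alpha_0^2 m_0$ (they coincide only when $\alpha_0\leq 1$).
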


\begin{reptheorem}{th: nonsc_th}
Assume $F$ is convex and $L(\nabla F)$ smooth, 
and $\exists \sigma_f$ such that $\mathbb E[\norm{\nabla f(x,\xi) - \nabla F(x)}^2] \leq \sigma_f^2$.
Denote $M_S = \sum_{s=0}^S m_s$. Let us set $\omega > 1$, $\alpha_0 \leq \frac{3}{4L(\nabla f)}$, $m_0 \in \mathbb{N}_\ast$, $m_s = \lfloor m_0 \omega^s \rfloor$, and $\beta_s =4\alpha_s \| A\|^2_{2, \infty}$. Then, for all S,

\begin{align*}
&\mathbb E[P(\bar{x}^S) - P(x_\star)] \leq \frac{C_1}{\sqrt{M_S}} \left[ C_2 + \frac{\log(M_S/m_0)}{\log(\omega)}C_3 \right] \\
& \mathbb E[P(\bar{x}^S) - P(x_\star)] \geq
  -\frac{2C_4}{\sqrt{M_S}}\|y_\star\|^2 - \frac{C_1}{\sqrt{M_S}} \left[ C_2 + \frac{\log(M_S/m_0)}{\log(\omega)}C_3 \right] \\
&\sqrt{\mathbb{E}\left[ \dist(A(\xi)\bar{x}^S, b(\xi))^2 \right]} \leq 
 \frac{1}{\sqrt{M_S}} \left[ 2C_4\|y_\star\| + 2 \sqrt{C_1 C_4}\sqrt{C_2 + \frac{\log(M_S/m_0)}{\log(\omega)}C_3} \right]
\end{align*}
where $C_1 = \frac{\sqrt{m_0\omega}}{\alpha_0 (m_0 -1)\sqrt{\omega-1}}$, $C_2 = \frac{\|x_\star - x_0^0\|^2}{2}+2\alpha_0 m_0 \sigma_f^2$, $C_3 = 2\alpha_0^2 \|A\|_{2,\infty}^2 m_0 \|y_\star \|^2 + 2\alpha_0 m_0 \sigma_f^2$ and  $C_4 = 4 \alpha_0 \sqrt{m_0}\|A\|_{2,\infty}^2 \frac{\sqrt{\omega}}{\sqrt{\omega-1}}$.
\end{reptheorem}

\begin{proof}
We first combine Lemma~\ref{lem: nonsc_smoothed_gap_lemma} and Lemma~\ref{lem: nonsc_param_lemma}:
\begin{align*}
\mathbb E[&S_{\beta_S}(\bar x^S)] =\mathbb{E} \left[ P_{\beta_S}(\bar{x}^S) - P_{\beta_S}(x_\star) \right] \leq \frac{1}{2\alpha_S m_S} \| x_\star - x_0 ^0\|^2 + \frac{\sum_{s=0}^{S-1}\beta_s \alpha_s m_s}{2\alpha_S m_S} \| y_\star \|^2 + 2 \frac{\sum_{s=0}^S \alpha_s^2 m_s}{\alpha_S m_S} \sigma_f^2 \\
&\leq \frac{\frac{\sqrt{m_0}}{(m_0-1)}\frac{\sqrt{\omega}}{\sqrt{\omega-1}}}{\alpha_0 \sqrt{M_s}} \Big[
\frac{1}{2} \| x_\star - x_0 ^0\|^2 + \frac{4\alpha_0^2 \|A\|^2_{2, \infty} m_0 \frac{\log(M_s/m_0)}{\log(\omega)}}{2} \| y_\star \|^2 + 2 \alpha_0 m_0 \left(\frac{\log(M_s/m_0)}{\log(\omega)}+1\right) \sigma_f^2 \Big] \\
& = \frac{C_1}{\sqrt{M_S}} \Big[C_2 + \frac{\log(M_S/m_0)}{\log(\omega)}C_3 \Big]
\end{align*}

We combine the inequality above with the bound $\beta_S \leq 4\alpha_0 \sqrt{m_0}\|A\|^2_{2, \infty} \frac{\sqrt{\omega}}{\sqrt{\omega - 1}} \frac{1}{\sqrt{M_s}} = \frac{C_4}{\sqrt{M_S}}$ and Lemma~\ref{lem: smooth_gap_lemma}:
\[
\sqrt{\mathbb{E}\left[ \dist(A(\xi)\bar{x}^s, b(\xi))^2 \right]} \leq
\sqrt{4 \beta_S^2 \norm{y_\star}^2 + 4 \beta_S S_{\beta_S}(\bar x^S)} \leq \frac{2C_4\norm{y_\star}}{\sqrt{M_S}} + \frac{2 \sqrt{C_1C_4}}{\sqrt{M_S}} \sqrt{C_2 + \frac{\log(M_S/m_0)}{\log(\omega)}C_3}
\]
The other inequalities follow similarly using
\[
S_{\beta_S}(\bar x^S) \geq P(\bar x^S) - P(x_\star) \geq -\frac{1}{4\beta_S} \int \dist{(A(\xi)\bar x^S, b(\xi))^2}\mu(d\xi) - \beta_S \| y_\star \|^2 \geq -2\beta_S \| y_\star \|^2 - S_{\beta_S}(\bar x^S)
\]
\end{proof}

\subsection{Restricted Strongly Convex Case}

\begin{lemma}\label{lem: sc_sm_gap}
Assume that $F$ is convex and $L(\nabla F)$-smooth, $P$ is 
$\mu$-restricted strongly convex and $\mathbb E[\norm{\nabla f(x, \xi) - \nabla F(x)}^2] \leq \sigma_f^2$ for all $x$.
Assume that for all $s$, $\frac{L(\nabla F)+\|A\|^2_{2, \infty}/\beta_s}{2\alpha_s} \leq 0$, $2\alpha_s \|A\|^2_{2, \infty} - \frac{\beta_s}{2} \leq 0$ and $\mu \alpha_s m_s \geq \frac{1}{c}$, for $c<1$. Let $ \bar{x}^S = \frac{1}{m_S} \sum_{k=1}^{m_S} x_k^S$. Then,
\begin{equation}
\mathbb{E} \left[ P_{\beta_S}(x_k^S) - P_{\beta_S}(x_\star) \right] \leq \frac{c^S}{2\alpha_S m_S}  \| x_\star - x_0^0 \|^2 + \frac{\sum_{s=0}^{S-1} c^{S+1-s} \beta_s \alpha_s m_s}{2\alpha_S m_S} \| y_\star \|^2 + \frac{\sum_{s=0}^{S-1} 4c^{S+1-s} \alpha_s^2 m_s}{2\alpha_S m_S} \sigma_f^2 +2\alpha_S \sigma_f^2.
\end{equation}
\end{lemma}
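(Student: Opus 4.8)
The plan is to mirror the single-inner-loop analysis of Lemma~\ref{lem: nonsc_smoothed_gap_lemma} and then replace its additive outer-loop recursion by a \emph{contracting} one driven by restricted strong convexity. The observation that makes this possible is that $x_\star$ is feasible, so $g_{\beta_s}(A(\xi)x_\star,\xi)=0$ almost surely and hence $P_{\beta_s}(x_\star)=P(x_\star)$; therefore restricted strong convexity of $P$ transfers to the smoothed objective, $P_{\beta_s}(x)-P_{\beta_s}(x_\star)\ge P(x)-P(x_\star)\ge \frac{\mu}{2}\|x-x_\star\|^2$, because $G_{\beta_s}\ge 0$.

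First I would reproduce, essentially verbatim, the per-iteration estimate from the proof of Lemma~\ref{lem: nonsc_smoothed_gap_lemma}: from the descent inequality for the $(L(\nabla F)+\|A\|^2_{2,\infty}/\beta_s)$-smooth map $F+G_{\beta_s}\circ A$, together with the three-point inequality, convexity of $f(\cdot,\xi)$, the $1/\beta_s$-Lipschitz gradient bound for $g_{\beta_s}$, and the variance splitting of the stochastic-gradient error, the step-size conditions give, after conditional expectation,
\begin{equation*}
\mathbb{E}_k[P_{\beta_s}(x_{k+1}^s)]-P_{\beta_s}(x_\star)\le \frac{1}{2\alpha_s}\|x_\star-x_k^s\|^2-\frac{1}{2\alpha_s}\mathbb{E}_k[\|x_\star-x_{k+1}^s\|^2]+2\alpha_s\sigma_f^2.
\end{equation*}
Summing over $k\in\{0,\dots,m_s-1\}$, telescoping, and using Jensen's inequality through $\bar x^s=\frac{1}{m_s}\sum_k x_k^s$ reproduces the inner-loop bound~\eqref{eq: inner_loop_rec1}.

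The genuinely new step is the outer recursion. Since Case~2 restarts with $x_0^{s+1}=\bar x^s$, I would lower bound the objective gap on the left of~\eqref{eq: inner_loop_rec1} by the transferred restricted strong convexity, $P_{\beta_s}(\bar x^s)-P_{\beta_s}(x_\star)\ge \frac{\mu}{2}\|x_0^{s+1}-x_\star\|^2$, while retaining the $\beta_s\|y_\star\|^2$ contribution through the smoothed-gap lower bound of Lemma~\ref{lem: smooth_gap_lemma}. The hypothesis $\mu\alpha_s m_s\ge 1/c$ is precisely what allows one to write $\frac{1}{\mu\alpha_s m_s}\le c$ and to absorb the stray $1/\mu$ into $c\,\alpha_s m_s$, turning the estimate into the one-step contraction $\mathbb{E}[\|x_0^{s+1}-x_\star\|^2]\le c\big(\mathbb{E}[\|x_0^{s}-x_\star\|^2]+\beta_s\alpha_s m_s\|y_\star\|^2+4\alpha_s^2 m_s\sigma_f^2\big)$, i.e.\ exactly the general convex recursion~\eqref{eq: iter_rec1} scaled by $c<1$. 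Unrolling over $s\in\{0,\dots,S-1\}$ gives $\mathbb{E}[\|x_0^S-x_\star\|^2]\le c^S\|x_0^0-x_\star\|^2+\sum_{s=0}^{S-1}c^{S+1-s}\big(\beta_s\alpha_s m_s\|y_\star\|^2+4\alpha_s^2 m_s\sigma_f^2\big)$, and substituting this into~\eqref{eq: inner_loop_rec1} written at level $S$, which supplies the trailing $2\alpha_S\sigma_f^2$, yields the claimed estimate.

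I expect the crux to be this third step: combining the two lower bounds on one and the same quantity $P_{\beta_s}(\bar x^s)-P_{\beta_s}(x_\star)$ — the restricted-strong-convexity bound that produces the geometric contraction and the smoothed-gap bound that furnishes the $\beta_s\|y_\star\|^2$ term — and then tracking the factor $c$ carefully so that the powers $c^{S+1-s}$ come out correctly upon unrolling. One must also verify that $\mu\alpha_s m_s\ge 1/c$ is strong enough to convert the additive $\frac{4\alpha_s}{\mu}\sigma_f^2$ error that strong convexity introduces into the $4c\alpha_s^2 m_s\sigma_f^2$ form matching the general convex terms. By contrast, the inner-loop machinery is inherited unchanged from Lemma~\ref{lem: nonsc_smoothed_gap_lemma}, since that argument never used convexity beyond what holds here.
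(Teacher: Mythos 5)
Your proposal follows the paper's proof essentially step for step: the inner-loop bound \eqref{eq: inner_loop_rec1} is inherited unchanged from Lemma~\ref{lem: nonsc_smoothed_gap_lemma}, restricted strong convexity is injected through the Case~2 restart rule $x_0^{s+1}=\bar x^s$ exactly as in the paper's inequality \eqref{eq: smoothed_obj_lowerbound_sc}, the hypothesis $\mu\alpha_s m_s \ge 1/c$ yields the same one-step contraction as the paper, and the unrolled recursion is substituted back into \eqref{eq: inner_loop_rec1} at stage $S$, which supplies the trailing $2\alpha_S\sigma_f^2$. So the route is the paper's own, and it is correct in substance.

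One arithmetic point deserves flagging. From the one-step contraction $\mathbb{E}\|x_0^{s+1}-x_\star\|^2 \le c\,\mathbb{E}\|x_0^{s}-x_\star\|^2 + c\,b_s$ with $b_s = \beta_s\alpha_s m_s\|y_\star\|^2 + 4\alpha_s^2 m_s\sigma_f^2$, the level-$s$ term acquires one factor of $c$ at introduction and one per each of the $S-1-s$ subsequent stages, so unrolling gives $c^{S-s}b_s$, not the $c^{S+1-s}b_s$ you claim ``comes out correctly.'' This is precisely what the paper's own proof derives in \eqref{eq: sc_iter_rec_final}; the exponent $c^{S+1-s}$ appearing in the lemma statement is an internal typo of the paper (it asserts a bound stronger by one factor of $c$ than the recursion yields), and since $c^{S+1-s} \le c^{S-s}$ with $c<1$, neither your argument nor the paper's establishes the statement as literally printed. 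The discrepancy is immaterial downstream, as Lemma~\ref{lem: sc_param_lemma} and Theorem~\ref{th: sc_th} work with $c^{S-s}$. A second, more minor remark: your transfer of restricted strong convexity, $P_{\beta_s}(x)-P_{\beta_s}(x_\star) \ge P(x)-P(x_\star) \ge \frac{\mu}{2}\|x-x_\star\|^2$ via $G_{\beta_s}\ge 0$ and feasibility of $x_\star$, takes the paper's definition literally with growth of $P$ at the saddle point $x_\star$; note that such global growth of $P$ at the constrained solution would make $x_\star$ an unconstrained minimizer of $P$, which is why the paper's inequality \eqref{eq: smoothed_obj_lowerbound_sc} retains the $-\frac{\beta_s}{2}\|y_\star\|^2$ term (it is naturally obtained from the Lagrangian lower bound in the proof of Lemma~\ref{lem: smooth_gap_lemma}, i.e.\ from growth of $\mathcal{L}(\cdot,y_\star)$, under which your direct transfer is not available). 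Since you retain the $\beta_s\|y_\star\|^2$ contribution anyway, your recursion coincides with the paper's in either reading, so this affects the justification of \eqref{eq: smoothed_obj_lowerbound_sc} but not the remainder of the argument.
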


\begin{proof}
We proceed same as the proof of Lemma 4.1, until~\eqref{eq: inner_loop_rec1}. In the case where $F(x)+h(x)$ satisfies restricted strong convexity, instead of~\eqref{eq: smoothed_obj_lowerbound}, we can derive
\begin{align}\label{eq: smoothed_obj_lowerbound_sc}
P_{\beta_s}(x) - P_{\beta_s}(x_\star) \geq -\frac{\beta_s}{2} \| y_\star\|^2 + \frac{\mu}{2} \| x - x_\star \|^2.
\end{align}

We use~\eqref{eq: smoothed_obj_lowerbound_sc} in~\eqref{eq: inner_loop_rec1}, along with the restarting rule $\bar{x}^s = x_0^{s+1}$ to get
\begin{align}
\mu\alpha_s m_s \mathbb{E} \left[ \| x_\star - x_0^{s+1}\|^2 \right] \leq \mathbb{E} \left[ \| x_\star - x_0^s \|^2 \right] + \beta_s \alpha_s m_s \| y_\star \|^2 + 4\alpha_s^2 m_s \sigma_f^2.
\end{align}

Further, since $\mu\alpha_s m_s \geq \frac{1}{c}$, for $c<1$:
\begin{align}
\mathbb{E} \left[ \| x_\star - x_0^{s+1}\|^2 \right] \leq c\mathbb{E} \left[ \| x_\star - x_0^s \|^2 \right] +  c \beta_s \alpha_s m_s \| y_\star \|^2 + 4c\alpha_s^2 m_s \sigma_f^2.
\end{align}

We now get, by recursively applying the inequality for $s\in\{ 0, 1, \dots, S-1\}$
\begin{align}\label{eq: sc_iter_rec_final}
\mathbb{E} \left[ \| x_\star - x_0^{S}\|^2 \right] \leq c^S \| x_\star - x_0^0 \|^2 + \sum_{s=0}^{S-1} c^{S-s} \beta_s \alpha_s m_s \| y_\star \|^2 + \sum_{s=0}^{S-1} 4c^{S-s} \alpha_s^2 m_s \sigma_f^2.
\end{align}

We plug~\eqref{eq: sc_iter_rec_final} into~\eqref{eq: inner_loop_rec1} to obtain
\begin{align}
\mathbb{E} \left[ P_{\beta_S}(\bar{x}_k^S) - P_{\beta_S}(x_\star) \right] \leq \frac{c^S}{2\alpha_S m_S}  \| x_\star - x_0^0 \|^2 + \frac{\sum_{s=0}^{S-1} c^{S-s} \beta_s \alpha_s m_s}{2\alpha_S m_S} \| y_\star \|^2 + \frac{\sum_{s=0}^{S-1} 4c^{S-s} \alpha_s^2 m_s}{2\alpha_S m_S} \sigma_f^2 +2\alpha_S \sigma_f^2.
\end{align}
\end{proof}

In the following lemma, we estimate the rates of the parameters:
\begin{lemma}\label{lem: sc_param_lemma}
Denote as $M_S = \sum_{s=0}^{S}m_s$ the total number of iterations to compute $\bar{x}^S$. Let $\omega > 1$. Let us choose $\alpha_0 \leq \frac{3}{4L(\nabla f)}$, $m_0 \geq \frac{\omega}{\mu\alpha_0}$, $m_s = \lfloor m_0 \omega^S \rfloor$, $\alpha_s = \alpha_0 \omega^{-s}$, $\beta_s = 4\alpha_s \|A\|^2_{2, \infty}$ and $c=\frac{1}{\omega} < 1$. Then, for all $s$, $\frac{L(\nabla F)+\|A\|^2_{2, \infty}/{\beta_s}}{2} - \frac{1}{2\alpha_s} \leq 0$ and $2\alpha_s \|A\|^2_{2, \infty} - \frac{\beta_s}{2} \leq 0$. Moreover,

\begin{align*}
&\beta_s \leq 4\alpha_0 {m_0}\|A\|^2_{2, \infty} \frac{{\omega}}{{\omega - 1}} \frac{1}{{M_s}} \\
&\alpha_s m_s \geq \alpha_0 (m_0-1) \\
&\sum_{s=0}^{S-1} c^{S-s} \beta_s \alpha_s m_s \leq  4c^{S} \alpha_0^2 \|A\|^2_{2, \infty} m_0 \left( \frac{\log(M_s/m_0)}{\log(\omega)}\right)\\
&\sum_{s=0}^{S-1} c^{S-s} \alpha_s^2 m_s \leq c^{S} \alpha_0^2 m_0 \left( \frac{\log(M_s/m_0)}{\log(\omega)}\right) \\
&c^S \leq \frac{\omega}{\omega-1}\frac{m_0}{M_S} 
\end{align*}
\end{lemma}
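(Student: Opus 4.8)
The plan is to mirror the proof of Lemma~\ref{lem: nonsc_param_lemma}, the only structural changes being the faster step-size decay $\alpha_s = \alpha_0\omega^{-s}$ and the geometric weights $c^{S-s}$ that now sit inside the sums. First I would dispose of the two smoothness conditions, which are purely algebraic. Since $\beta_s = 4\alpha_s\|A\|^2_{2,\infty}$, the inequality $2\alpha_s\|A\|^2_{2,\infty}-\tfrac{\beta_s}{2}\le 0$ holds with equality; and substituting $\|A\|^2_{2,\infty}/\beta_s = 1/(4\alpha_s)$ turns $\tfrac{L(\nabla F)+\|A\|^2_{2,\infty}/\beta_s}{2}-\tfrac{1}{2\alpha_s}$ into $\tfrac{L(\nabla F)}{2}-\tfrac{3}{8\alpha_s}$, which is nonpositive because $\alpha_s\le\alpha_0\le\tfrac{3}{4L(\nabla f)}$ and $\alpha_s$ is nonincreasing. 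This is verbatim the argument from the general convex parameter lemma.

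Next I would establish the two-sided control of $\omega^S$ in terms of $M_S$. The upper estimate $M_S=\sum_{s=0}^S\lfloor m_0\omega^s\rfloor\le m_0\frac{\omega^{S+1}-1}{\omega-1}$ rearranges to $\omega^{-S}\le\frac{\omega}{\omega-1}\frac{m_0}{M_S}$; since $c=1/\omega$ this is immediately the last claim $c^S\le\frac{\omega}{\omega-1}\frac{m_0}{M_S}$, and multiplying $\beta_S=4\alpha_0\|A\|^2_{2,\infty}\omega^{-S}$ by the same bound gives the first claim (the generic index $s$ follows by applying the identical geometric sum to $M_s$). For the lower estimate I would keep only the $s=0$ and $s=S$ terms, $M_S\ge m_0+(m_0\omega^S-1)\ge m_0\omega^S$, whence $S\le\frac{\log(M_S/m_0)}{\log(\omega)}$. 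The bound $\alpha_s m_s\ge\alpha_0(m_0-1)$ then follows from $\lfloor m_0\omega^s\rfloor\ge m_0\omega^s-1$ together with $\omega^{-s}\le 1$, since $\alpha_s m_s\ge\alpha_0(m_0-\omega^{-s})$.

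The crux is the two weighted sums, and here the choice $c=1/\omega$ is exactly what makes everything collapse. Writing $c^{S-s}=c^S\omega^s$ and using $\lfloor m_0\omega^s\rfloor\le m_0\omega^s$, the single surplus factor $\omega^{-s}$ carried by $\beta_s\alpha_s m_s=4\alpha_0^2\|A\|^2_{2,\infty}\omega^{-2s}\lfloor m_0\omega^s\rfloor\le 4\alpha_0^2\|A\|^2_{2,\infty}m_0\,\omega^{-s}$ is cancelled precisely by $\omega^s$, so each summand reduces to the $s$-independent constant $4c^S\alpha_0^2\|A\|^2_{2,\infty}m_0$; summing the $S$ terms and inserting $S\le\frac{\log(M_S/m_0)}{\log(\omega)}$ yields the third claim, and the identical computation with $\alpha_s^2 m_s\le\alpha_0^2 m_0\,\omega^{-s}$ yields the fourth.

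The only genuinely delicate point—and the reason the restricted strongly convex schedule differs from the general convex one—is this cancellation. It forces $\alpha_s$ to decay like $\omega^{-s}$ rather than $\omega^{-s/2}$: one needs exactly one surplus power of $\omega^{-s}$ in both $\beta_s\alpha_s m_s$ and $\alpha_s^2 m_s$ so that $c^{S-s}\omega^{-s}=c^S$ is constant in $s$, keeping the weighted sums at $O(S)=O(\log M_S)$ terms instead of letting them grow geometrically. The accompanying decay $\beta_s\sim\omega^{-s}\sim 1/M_S$ is then what upgrades the final rate to $1/M_S$ once the bounds are fed through Lemma~\ref{lem: sc_sm_gap} and Lemma~\ref{lem: smooth_gap_lemma}. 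I would finally remark that the schedule is chosen to be compatible with the hypothesis $\mu\alpha_s m_s\ge 1/c=\omega$ of Lemma~\ref{lem: sc_sm_gap}, which is what motivates the requirement $m_0\ge\omega/(\mu\alpha_0)$.
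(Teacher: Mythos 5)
Your proposal is correct and follows essentially the same route as the paper's proof: the smoothness conditions via $\|A\|^2_{2,\infty}/\beta_s = 1/(4\alpha_s)$, the bounds $\omega^{-s}\le\frac{\omega}{\omega-1}\frac{m_0}{M_s}$ and $M_S\ge m_0\omega^S$, and the key cancellation $c^{S-s}\omega^{-s}=c^S$ that makes each weighted summand constant so the sums are $O(S)$. The only cosmetic difference is that you bound the fourth sum directly while the paper deduces it from the third via $\beta_s=4\alpha_s\|A\|^2_{2,\infty}$; these are identical computations.
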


\begin{proof}
We skip the proofs for the parts of the lemma that are the same as Lemma~\ref{lem: nonsc_param_lemma}.

We have
\begin{equation*}
\beta_s = 4\alpha_0\|A\|^2_{2, \infty} \omega^{-s} \leq 4\alpha_0 m_0 \|A\|^2_{2, \infty} \frac{\omega}{\omega-1}\frac{1}{M_s}.
\end{equation*}
In addition,
\begin{equation*}
\alpha_s m_s = \alpha_0 \omega^{-s} \lfloor m_0 \omega^s \rfloor \geq \alpha_0 \omega^{-s} (m_0 \omega^s -1) \geq \alpha_0 (m_0-1),
\end{equation*}
where the last inequality follows since $\omega^s \geq 1$.

We have
\begin{align*}
\sum_{s=0}^{S-1}c^{S-s}\beta_s \alpha_s m_s \leq \alpha_0 m_0 \sum_{s=0}^{S-1}c^{S-s}\beta_s \leq 4\alpha_0^2 \|A\|^2_{2, \infty} m_0 c^{S} \sum_{s=0}^{S-1} (\omega c)^{-s} = S \times 4\alpha_0^2 \|A\|^2_{2, \infty} m_0 c^{S}
\end{align*}

Next, we have $c^S = \omega^{-S} \leq \frac{\omega}{\omega-1}\frac{m_0}{M_S}$.

Fourth bound directly follows by combining the third bound with $\beta_s=4\alpha_s\|A\|^2_{2, \infty}$.
\end{proof}

\begin{reptheorem}{th: sc_th}
	Assume $F$ is convex and $L(\nabla F)$ smooth, $P$ is $\mu$-restricted strongly convex
	and $\exists \sigma_f$ such that $\mathbb E[\norm{\nabla f(x,\xi) - \nabla F(x)}^2] \leq \sigma_f^2$.
	Denote $M_S = \sum_{s=0}^S m_s$. Let us set $\omega > 1$, $\alpha_0 \leq \frac{3}{4L(\nabla f)}$, $m_0 \geq \frac{\omega}{ \mu\alpha_0}$, $m_s = \lfloor m_0 \omega^s \rfloor$, and $\beta_s =4\alpha_s \| A\|^2_{2, \infty}$. Then, for all S,
	\begin{align*}
&\mathbb E[P(\bar{x}^s) - P(x_\star)]\leq  \frac{1}{M_s} \left[ D_1 + \frac{\log(M_s/m_0)}{\log(\omega)} D_2 \right] \\
&\mathbb E[P(\bar{x}^s) - P(x_\star)] \geq -\frac{2 D_3}{M_s}\|y_\star\|^2 - \frac{1}{M_s} \left[ D_1 + \frac{\log(M_s/m_0)}{\log(\omega)}D_2 \right] \\
&\sqrt{\mathbb{E}\left[ \dist(A(\xi)\bar{x}^s, b(\xi))^2 \right]} \leq \frac{1}{{M_s}} \left[ 2D_3\|y_\star\| + 2 \sqrt{D_3} \sqrt{D_1 + \frac{\log(M_s/m_0)}{\log(\omega)}D_2} \right]
\end{align*}
where $D_1 = \frac{\omega}{\omega-1}\frac{m_0}{\alpha_0(m_0-1)}\frac 12 \norm{x_0^0 - x_\star}^2 + 2 \alpha_0 m_0  \frac{\omega}{\omega-1} \sigma_f^2$, $D_2 = \frac{2m_0^2\alpha_0\omega}{(m_0-1)(\omega-1)}\Big(\|A\|^2_{2, \infty}\|y_\star\|^2 + \sigma_f^2\Big)$,
$D_3 = 4 \alpha_0 {m_0}\|A\|^2_{2, \infty} \frac{{\omega}}{{\omega - 1}}$.
\end{reptheorem}

\begin{proof}
We first combine Lemma~\ref{lem: sc_sm_gap} and Lemma~\ref{lem: sc_param_lemma}:
\begin{align*}
&\mathbb{E} \left[ S_{\beta_S}(x_k^S)\right] \leq \frac{c^S}{2\alpha_S m_S}  \| x_\star - x_0^0 \|^2 + \frac{\sum_{s=0}^{S-1} c^{S-s} \beta_s \alpha_s m_s}{2\alpha_S m_S} \| y_\star \|^2 + \frac{\sum_{s=0}^{S-1} 4c^{S-s} \alpha_s^2 m_s}{2\alpha_S m_S} \sigma_f^2 +2\alpha_S \sigma_f^2 \\
& \leq \frac{\frac{\omega}{\omega-1}\frac{m_0}{M_S} }{\alpha_0 (m_0-1)} \Big[ \frac 12\| x_\star - x_0^0 \|^2 + \frac{4 \alpha_0^2 \|A\|^2_{2, \infty} m_0 \left( \frac{\log(M_s/m_0)}{\log(\omega)}\right)}{2} \| y_\star \|^2 + 2 \alpha_0^2 m_0 \left( \frac{\log(M_s/m_0)}{\log(\omega)}\right)\sigma_f^2 \Big] +\frac{\beta_S}{2\norm{A}^2_{2, \infty}} \sigma_f^2\\
& \leq \frac{1}{M_S}\Big[D_1 +  \frac{\log(M_s/m_0)}{\log(\omega)} D_2\Big]
\end{align*}
where $\beta_s \leq 4\alpha_0 {m_0}\|A\|^2_{2, \infty} \frac{{\omega}}{{\omega - 1}} \frac{1}{{M_s}} = \frac{D_3}{M_S}$.

We then use Lemma~\ref{lem: smooth_gap_lemma}.
\[
\sqrt{\mathbb{E}\left[ \dist(A(\xi)\bar{x}^s, b(\xi))^2 \right]} \leq
\sqrt{4 \beta_S^2 \norm{y_\star}^2 + 4 \beta_S S_{\beta_S}(\bar x^S)} \leq \frac{2D_3\norm{y_\star}}{M_S} + \frac{2 \sqrt{D_3}}{M_S} \sqrt{D_1 + \frac{\log(M_S/m_0)}{\log(\omega)}D_2}\;.
\]
The other inequalities follow similarly.
\end{proof}

\end{document}